\newcommand{\N}{\mathbb{N}}
\newcommand{\M}{\mathscr{M}}
\theoremstyle{plain}
\newtheorem{theorem}{Theorem}[section]
\newtheorem{prop}[theorem]{Proposition}
\newtheorem{lemma}[theorem]{Lemma}
\newtheorem{cor}[theorem]{Corollary}
\theoremstyle{definition}
\newtheorem{dfn}[theorem]{Definition}
\newtheorem{example}[theorem]{Example}
\theoremstyle{remark}
\newtheorem{remark}[theorem]{Remark}
\numberwithin{equation}{section}
\begin{document}

\title{The Demazure product extended to biwords}

\author{William Q.~Erickson}
\address{
William Q.~Erickson\\
Department of Mathematics\\
Baylor University \\ 
One Bear Place \#97328\\
Waco, TX 76798} 
\email{will\_erickson@baylor.edu}

\begin{abstract}

    The symmetric group $\mathfrak{S}_n$ (and more generally, any Coxeter group) admits an associative operation known as the Demazure product.
    In this paper, we first extend the Demazure product to the (infinite) set of all biwords on $\{1, \ldots, n\}$, or equivalently, the set of all $n \times n$ nonnegative integer matrices.
    We define this product diagrammatically, via braid-like graphs we call \emph{kelp beds}, since they significantly generalize the seaweeds introduced by Tiskin~(2015).
    Our motivation for this extended Demazure product arises from optimization theory, in particular the semigroup of all $(n+1) \times (n+1)$ simple nonnegative integer Monge matrices equipped with the distance (i.e., min-plus) product.
    As our main result, we show that this semigroup of Monge matrices is isomorphic to the semigroup of biwords equipped with the extended Demazure product.
    We exploit this isomorphism to write down generating functions for the growth series of the Monge matrices with respect to certain natural matrix norms.
    
\end{abstract}

\subjclass[2020]{Primary 
05E16; Secondary 90C24, 05B20}

\keywords{Demazure product, biwords, Monge matrices, min-plus product, growth series}

\maketitle

\section{Introduction}

\subsection{The Demazure product on the symmetric group}

\label{sub:demazure}

The symmetric group $\mathfrak{S}_n$ on $n$ letters is generated by the adjacent transpositions $\sigma_1, \ldots, \sigma_{n-1}$, subject to the relations
\begin{enumerate}
    \item $\sigma_i^2 = {\rm id}$,
    \item $\sigma_i \sigma_j = \sigma_j \sigma_i$ if $|i-j|>1$,
    \item $\sigma_i \sigma_{i+1} \sigma_i = \sigma_{i+1} \sigma_i \sigma_{i+1}$.
\end{enumerate}
If we replace relation (1) by the relation $\sigma_i^2 = \sigma_i$, then we obtain an associative product on $\mathfrak{S}_n$ known as the \emph{Demazure product}, often denoted by the symbol $\star$.
(Note that this Demazure product can be defined analogously for any Coxeter system $(W,S)$ with the generators in $S$ playing the role of the $\sigma_i$.)
The underlying set of $\mathfrak{S}_n$ equipped with the Demazure product is often called the \emph{0-Hecke monoid} associated to $\mathfrak{S}_n$; see~\cite{Humphreys}*{Ch.~7} and~\cite{BB}*{\S6}.
We refer the reader to Demazure's original treatment~\cite{Demazure}*{\S5.6}; see also the contemporaneous work by Bernstein--Gelfand--Gelfand~\cite{BGG} which made use of this product.
The Demazure product typically arises in algebraic geometry in connection with Schubert varieties; see, for example, its application in obtaining the main result of~\cite{BM}.

\subsection{Biwords and kelp beds}

In this paper, we extend the Demazure product $\star$ from the symmetric group $\mathfrak{S}_n$ to the (infinite) set of all biwords with entries in $\{1, \ldots, n\}$.
This present subsection describes the theoretical interest in this extension, previewing Section~\ref{sec:Demazure}.
The next subsection (Section~\ref{sub:application}) describes the motivation arising from Monge matrices, which previews Section~\ref{sec:Monge}.
Our main result in Section~\ref{sec:main} unites these two seemingly unrelated settings.

A \emph{biword} (also called a \emph{generalized permutation}, arising commonly in classical algebraic combinatorics) is a two-row array
\[
\begin{pmatrix}
    a_1 & a_2 & \ldots & a_\ell \\
    b_1 & b_2 & \ldots & b_\ell
\end{pmatrix}
\]
such that $a_1 \leq \cdots \leq a_\ell$, and if $a_i = a_{i+1}$ then $b_i \leq b_{i+1}$.
The length $\ell$ can be any nonnegative integer.
Note that $\mathfrak{S}_n$ embeds naturally by identifying each $\sigma \in \mathfrak{S}_n$ with the biword
\[
\begin{pmatrix}
    1 & 2 & \ldots & n \\
    \sigma(1) & \sigma(2) & \ldots & \sigma(n)
\end{pmatrix}.
\]
In order to define our extended Demazure product, we visualize biwords as $(n,n)$-bipartite multigraphs, with $\ell$ edges drawn between two rows of $n$ vertices.
Then the product of two biwords is the result of stacking their graphs one above the other, and fusing certain edge pairs to obtain a third graph (see Definition~\ref{def:Demazure extended} for details).

In the original case of the Demazure product on $\mathfrak{S}_n$, a graphical interpretation has been given by Tiskin~\cite{Tiskin}, in terms of \emph{seaweed braids}.
Briefly, each element of $\mathfrak{S}_n$ corresponds to a 1-regular $(n,n)$-bipartite graph in a natural way, and Tiskin calls the edges \emph{seaweeds}; to multiply two such graphs, Tiskin stacks one over the other and then ``combs'' the $n$ seaweeds so that every pair crosses at most once. 
In the present paper, we depict biwords in an analogous graphical way; the difference, of course, is the fact that the graphs of two biwords generally contain different numbers of edges, and moreover the number of possible edges is unbounded.
To reflect the scale of this generalization from permutations to biwords, we call our graphs by the name \emph{kelp beds}, since they allow an arbitrary number of seaweeds (now called \emph{kelps}) to grow at each vertex, rather than exactly one.
The following example for $n=4$ shows the product of two biwords, along with their kelp beds depicted beneath them:
\begin{align}
\label{example in intro}
\begin{split}
    \begin{pmatrix}
        2&2&3&3&3&4\\
        1&3&3&3&3&2
    \end{pmatrix}
    \star 
    \begin{pmatrix}
        1&1&2&3\\
        3&4&4&1
    \end{pmatrix}
    &=
    \begin{pmatrix}
        2&3&4\\
        3&4&1
    \end{pmatrix} \\
    \begin{tikzpicture}[scale=.5,every node/.style={scale=.75}]

\def\n{4}

\foreach \i in {1,...,\n} {
    \node[draw, fill=black, circle, inner sep=1.5pt] (a\i) at (\i, 1) {};
    \node[above=2pt] at (a\i) {};
}

\foreach \i in {1,...,\n} {
    \node[draw, fill=black, circle, inner sep=1.5pt] (b\i) at (\i, 0) {};
    \node[below=2pt] at (b\i) {};
}

\draw[thick] (a2) .. controls +(0,-0.5) and +(0,0.5) .. (b1);
\draw[thick] (a2) .. controls +(0,-0.5) and +(-.5,0) .. (b3);
\draw[thick] (a3) .. controls +(0,-0.5) and +(0,0.5) .. (b3);
\draw[thick] (a3) .. controls +(.25,-0.5) and +(.25,0.5) .. (b3);
\draw[thick] (a3) .. controls +(-.25,-0.5) and +(-.25,0.5) .. (b3);
\draw[thick] (a4) .. controls +(0,-0.5) and +(0,0.5) .. (b2);

\end{tikzpicture}
\hspace{45pt}
\begin{tikzpicture}[scale=.5,every node/.style={scale=.75}]

\def\n{4}

\foreach \i in {1,...,\n} {
    \node[draw, fill=black, circle, inner sep=1.5pt] (a\i) at (\i, 1) {};
    \node[above=2pt] at (a\i) {};
}

\foreach \i in {1,...,\n} {
    \node[draw, fill=black, circle, inner sep=1.5pt] (b\i) at (\i, 0) {};
    \node[below=2pt] at (b\i) {};
}

\draw[thick] (a1) .. controls +(0,-0.5) and +(-.5,0.5) .. (b3);
\draw[thick] (a1) .. controls +(0.5,-0.5) and +(-.5,+0.5) .. (b4);
\draw[thick] (a2) .. controls +(0.5,-0.5) and +(0,0.5) .. (b4);
\draw[thick] (a3) .. controls +(0,-0.5) and +(0,0.5) .. (b1);

\end{tikzpicture}
\hspace{5pt}
& \hspace{15pt}
\begin{tikzpicture}[scale=.5,every node/.style={scale=.75}]

\def\n{4}

\foreach \i in {1,...,\n} {
    \node[draw, fill=black, circle, inner sep=1.5pt] (a\i) at (\i, 1) {};
    \node[above=2pt] at (a\i) {};
}

\foreach \i in {1,...,\n} {
    \node[draw, fill=black, circle, inner sep=1.5pt] (b\i) at (\i, 0) {};
    \node[below=2pt] at (b\i) {};
}

\draw[thick] (a2) .. controls +(0,-0.5) and +(0,0.5) .. (b3);
\draw[thick] (a3) .. controls +(0,-0.5) and +(0,+0.5) .. (b4);
\draw[thick] (a4) .. controls +(0,-0.5) and +(0,0.5) .. (b1);

\end{tikzpicture}
    \end{split}
\end{align}
(We will take up this example again in Example~\ref{ex:example from intro}, where we explain the kelp bed manipulations in detail.)

Our generalization of the Demazure product, as described above, may remind the reader of Knuth's well-known generalization~\cite{Knuth} of the Robinson--Schensted correspondence~\cite{Schensted}.
Indeed, the analogy is a close one, since both generalizations extend the symmetric group $\mathfrak{S}_n$ to the set of all biwords.
Combinatorially, the analogy can be summarized as follows, where the set of objects above each underbrace is in bijective correspondence with the set directly below: 
\begin{equation}
\label{analogy}
\underbrace{\parbox{.25\linewidth}{\centering pairs of same-shaped \\ standard tableaux\\\phantom{.}}}_{\substack{\mathfrak{S}_n \\ \text{(Robinson--Schensted)}}} : \underbrace{\parbox{.25\linewidth}{\centering pairs of same-shaped \\ semistandard tableaux \\\phantom{.}}}_{\substack{\text{biwords}\\ \text{(Knuth)}}} : :
\underbrace{\parbox{.18\linewidth}{\centering \phantom{'} \\ seaweed braids \\ \phantom{.}}}_{\substack{\mathfrak{S}_n \\ \text{(Tiskin)}}} : \underbrace{\parbox{.1\linewidth}{\centering \phantom{'} \\ kelp beds \\ \phantom{.}}}_{\substack{\text{biwords}\\ \text{(this paper)}}}
\end{equation}
In the analogy displayed above, the structure which is extended on the left-hand side is the row insertion algorithm that endows the Robinson--Schensted correspondence with its many notable properties; the structure which is extended on the right-hand side is, as we will show in this paper, the Demazure product on $\mathfrak{S}_n$, and along with it a certain connection to the min-plus matrix product.

We should point out another, quite different, extension of the Demazure product which is introduced in a recent preprint by Pflueger~\cite{Pflueger}.
In that paper, the Demazure product is extended not to biwords, but to a certain set of integer permutations $\sigma : \mathbb{Z} \longrightarrow \mathbb{Z}$ called \emph{almost sign-preserving permutations}.
In all three papers, however --- namely, Tiskin~\cite{Tiskin}, Pflueger~\cite{Pflueger}, and the present paper --- a major motivation for studying the Demazure product is its connection to the distance product (i.e., the min-plus product) of matrices, as explained below.

\subsection{The distance product of Monge matrices}
\label{sub:application}

An important role in optimization theory is played by matrices $A$ with the \emph{Monge property}:
\[
    A_{ij} + A_{IJ} \leq A_{iJ} + A_{Ij}, \qquad \text{for all $i < I$ and $j < J$}.
\]
The term was coined by Hoffman in his work~\cite{Hoffman} proving that the transportation problem can be solved by the greedy algorithm known as the ``northwest corner rule'' if and only if the associated cost matrix has this Monge property; the namesake, 18th-century geometer Gaspard Monge, was the first to consider the transportation problem~\cite{Monge} and is widely regarded as the originator of optimal transport theory~\cite{Villani}.
We refer the reader to the excellent survey article~\cite{Burkard} describing many applications of Monge matrices, along with the more recent references contained in~\cite{Tiskin} relating to graph and string algorithms; see also~\cite{Russo}, for example.

In many applications of Monge matrices, it is most natural to consider the \emph{distance product}, denoted by $\odot$, rather than the ordinary matrix product.
The distance product is also known as the \emph{tropical} or the \emph{min-plus} product; this is obtained from the ordinary matrix product by replacing ``+'' with ``min,'' and replacing ``$\times$'' with ``+''.
The name ``distance product'' refers to the fact that if $A$ is the adjacency matrix of a weighted graph, then the entries of $A^{\odot k}$ give the distances between vertices using paths of length at most $k$.
Tiskin~\cite{Tiskin}, while studying faster algorithms for computing the distance product of Monge matrices, observed that $(\mathfrak{S}_n, \star)$ is isomorphic as a monoid to a set of certain special Monge matrices equipped with the distance product $\odot$.
In particular, Tiskin associates to each element $\sigma \in \mathfrak{S}_n$ a ``simple unit-Monge matrix'' which is its image under this isomorphism of monoids.

The main result of our paper (Theorem~\ref{thm:main result}) removes Tiskin's ``unit'' condition, thus extending his isomorphism to a semigroup isomorphism $\Phi$ between the set of biwords equipped with the extended Demazure product $\star$, and the set of simple Monge matrices of a given dimension equipped with the distance product $\odot$.
(We follow Tiskin in calling a Monge matrix \emph{simple} if its first column and bottom row are all 0's.)
In other words, stacking and fusing two of our kelp beds (as described above) is nothing other than computing the distance product of their associated Monge matrices (i.e., their images under $\Phi$).
We emphasize, as with our Robinson--Schensted--Knuth comparison before, that this generalization expands a finite set (the set of $n!$ many simple \emph{unit}-Monge matrices) to an infinite one (the set of all $(n+1) \times (n+1)$ simple Monge matrices with nonnegative integer entries).
In fact, one could extend the analogy~\eqref{analogy} as follows:
\[
\ldots \text{\eqref{analogy}} \ldots : \: : 
\underbrace{\parbox{.3\linewidth}{\centering \phantom{'} \\ simple unit-Monge matrices \\ \phantom{.}}}_{\substack{\mathfrak{S}_n \\ \text{(Tiskin)}}} : \underbrace{\parbox{.25\linewidth}{\centering \phantom{'} \\ simple Monge matrices \\ \phantom{.}}}_{\substack{\text{biwords}\\ \text{(this paper)}}}
\]

Tiskin's motivation in~\cite{Tiskin} was computational: viewing simple unit-Monge matrices as elements of $\mathfrak{S}_n$ allows him to give an algorithm for their distance product with runtime $O(n \log n)$, improving on the previously known algorithm with runtime $O(n^2)$.
We have not yet determined whether our extension to all simple Monge matrices will retain a similar computational advantage in general --- we are somewhat doubtful of this --- and we would be interested in the expertise of specialists in computer science and algorithms.

As secondary results (and actually as an application of our isomorphism $\Phi$), we give closed-form generating functions for the growth series of the simple Monge matrices, with respect to two typical matrix norms; see Theorem~\ref{thm:growth series}.
Computing these series leads to some interesting bijections with other objects in the combinatorial literature, which we collect in Propositions~\ref{prop:plane partitions} and~\ref{prop:interpretations}.

\begin{remark}
    \label{rem:biwords}
    Throughout the paper, to keep notation compact and concrete, we will usually view biwords as $n \times n$ matrices $X$ with nonnegative integer entries, where the entry $X_{ij}$ is the number of times the column $(i,j)^T$ appears in the biword.
    (Equivalently, the matrix $X$ is the biadjcency matrix of the kelp bed of the biword.)
    Hence, glancing through the paper, the reader will primarily see us working with this set of matrices, which we denote by $\N^{n \times n}$.
    In this incarnation, the group $\mathfrak{S}_n$ embeds as the permutation matrices.
    Philosophically, however, we somewhat prefer the language of ``biwords'' (i.e., generalized permutations), which is why we have emphasized that perspective in this introduction.
\end{remark}

\section{The Demazure product extended to biwords}
\label{sec:Demazure}

\subsection{Matrices, biwords, and kelp beds}

Fix a positive integer $n$, and let $\N^{n \times n}$ denote the set of all $n \times n$ matrices with entries in $\N \coloneqq \mathbb{Z}_{\geq 0}$.
As usual, if $X \in \N^{n \times n}$, then we write $X_{ij}$ for the $(i,j)$ entry of $X$.
We write $[n] \coloneqq \{1, \ldots, n\}$.
Of course, rather than viewing $\N^{n \times n}$ as the set of all $\N$-valued functions on $[n] \times [n]$, one can instead view elements of $\N^{n \times n}$ as (finite) multisets with elements taken from $[n] \times [n]$.
From this perspective, $X$ is the multiset
\begin{equation}
    \label{X as multiset}
    X = \Big\{ \underbrace{(i,j), \ldots, (i,j)}_{X_{ij} \text{ copies}} : 1 \leq i,j \leq n \Big\}.
\end{equation}
With the introduction (and Remark~\ref{rem:biwords}) in mind, we point out that if one lists off the elements of $X$ in lexicographical order, writing each copy of $(i,j)$ as a column vector, then one obtains a \emph{biword} in the classical sense.
Since from now on we will speak of matrices in $\N^{n \times n}$ rather than ``biwords,'' we take the opportunity now to remark that there is no essential difference between them.

An element $X \in \N^{n \times n}$ can also be viewed as the biadjacency matrix of a labeled $(n,n)$-bipartite graph, depicted as follows.
We draw two rows of vertices, with each row labeled $1, \ldots, n$ from left to right, and we draw $X_{ij}$ many edges $(i,j)$ from vertex $i$ in the top row to vertex $j$ in the bottom row.
Hence the total number of edges equals the sum of the entries in $X$.
We depict an edge as a curve whose vertical projection is always directed downward.
By analogy with the work of Tiskin~\cite{Tiskin}, explained in the introduction, we call the edges \emph{kelps}, and we call the graph itself the \emph{kelp bed} of the matrix $X$.

When viewing matrices as their kelp beds, we will often use the multiset convention~\eqref{X as multiset}, whereby $X$ is the multiset of its kelps.
For example, when we write  ``Let $x \in X$,'' we mean ``Let $x$ be some kelp $(i,j)$ appearing at least once in the kelp bed of $X$.''
In particular, there are $X_{ij}$ many distinguishable copies of each kelp $(i,j)$.
Likewise, $X \cup \{(i,j)\}$ is always understood as a sum of multisets, or equivalently, as the matrix obtained from $X$ by replacing $X_{ij}$ with $X_{ij} + 1$.
We write $\#X$ to denote the cardinality of $X$ as a multiset; equivalently, $\#X$ is the number of kelps in the kelp bed of $X$, and also the sum of the entries of the matrix $X$.

\subsection{The Demazure product extended to \texorpdfstring{$\N^{n \times n}$}{the matrices}}

Recall the Demazure product $\star$ on the symmetric group $\mathfrak{S}_n$, described in Section~\ref{sub:demazure}.
In this subsection, we will extend this Demazure product to all of $\N^{n \times n}$ by manipulating kelp beds.
Roughly speaking, we will stack two kelp beds, reduce to a certain subdiagram of that stack, and then fuse certain pairs of kelps to obtain a third kelp bed.
We begin with two important definitions (Definitions~\ref{def:up-down} and~\ref{def:tangledness}) regarding the notion of an \emph{up--down pair} in a stack of two kelp beds. 

\begin{dfn}[Up--down pairs]
    \label{def:up-down}
    Let $X,Y \in \N^{n \times n}$.
    An \emph{up--down pair} in $(X,Y)$ is a pair $(x,y) \in X \times Y$, where $x = (a,b)$ and $y = (c,d)$, such that $b \leq c$.
\end{dfn}

Given $X,Y \in \N^{n \times n}$, viewed as kelp beds, it will be convenient to visualize $(X,Y)$ as the graph obtained by identifying the bottom vertices of $X$ with the top vertices of $Y$.
The use of the term ``up--down pair'' for $(x,y)$ is due to the fact that as one scans the middle row of vertices in $(X,Y)$ from left to right, one sees the kelp $x$ (which ``grows'' upward) before the kelp $y$ (which ``grows'' downward).

\begin{dfn}[Weight]
\label{def:tangledness}
    Let $X,Y \in \N^{n \times n}$.
    A \emph{system of up--down pairs} for $(X,Y)$ is a multiset $\{(x_i, y_i)\}$ of up--down pairs, such that $\bigcup_i x_i \subseteq X$ and $\bigcup_i y_i \subseteq Y$ as multisets.
    The \emph{weight} of $(X,Y)$, denoted by ${\rm wt}(X,Y)$ is the cardinality of the largest system of up--down pairs for $(X,Y)$.
    \end{dfn}

We are now ready for the central definition of this paper.

\begin{dfn}[Demazure product on $\N^{n \times n}$]
\label{def:Demazure extended}
    Let $X,Y \in \N^{n \times n}$.
    The \emph{Demazure product} $X \star Y \in \N^{n \times n}$ is defined via kelp beds as follows:

    \begin{enumerate}
       
        \item Set $X_0 = Y_0 = \varnothing$. \\
        Starting at $\ell=1$, construct the kelp sub-beds $X_\ell \subseteq X$ and $Y_\ell \subseteq Y$ iteratively as follows:

        \begin{enumerate}
        
        \item Let $x_\ell \in X$ be the rightmost kelp (with respect to the top vertex, then the bottom to break ties) whose top vertex is weakly left of all the kelps in $X_{\ell-1}$.
        If no such $x_\ell$ exists, then the construction terminates at $\ell-1$.
        
        \item Let $y_\ell \in Y$ be the leftmost kelp (with respect to bottom vertex, then top to break ties) such that ${\rm wt}(X_{\ell-1} \cup \{x_\ell\}, \: Y_{\ell-1} \cup \{y_\ell\}) > {\rm wt}(X_{\ell-1}, Y_{\ell-1})$.
        If no such $y_\ell$ exists, then return to step (a) and replace $x_\ell$ by the next kelp in $X$ to its left.

        \item Set $X_\ell = X_{\ell-1} \cup \{x_\ell\}$ and $Y_\ell =  Y_{\ell-1} \cup \{y_\ell\}$.
        Then increment $\ell$ by 1 and return to step (a).

        \end{enumerate}

        \item Let $w$ be the index at which the construction above terminates.
        Then $X \star Y$ is obtained from $(X_w, Y_w)$ by deleting the middle row of vertices, and fusing each pair $(x_\ell, y_\ell)$ into a single kelp.
    \end{enumerate}
    
    \end{dfn}

\begin{example}
\label{ex:example from intro}
    We reexamine the example~\eqref{example in intro} from the introduction, where $n=4$.
    Let $X$ and $Y$ be the following matrices (shown along with their kelp beds):
    \[
    X = \begin{bmatrix}
        0 & 0 & 0 & 0 \\
        1 & 0 & 1 & 0 \\
        0 & 0 & 3 & 0 \\
        0 & 1 & 0 & 0
    \end{bmatrix}
    = \begin{tikzpicture}[scale=.5,every node/.style={scale=.75},baseline=5pt]
    \def\n{4}
    \foreach \i in {1,...,\n} {
    \node[draw, fill=black, circle, inner sep=1.5pt] (a\i) at (\i, 1) {};
    \node[above=2pt] at (a\i) {\i};
}
    \foreach \i in {1,...,\n} {
    \node[draw, fill=black, circle, inner sep=1.5pt] (b\i) at (\i, 0) {};
    \node[below=2pt] at (b\i) {\i};
}
\draw[thick] (a2) .. controls +(0,-0.5) and +(0,0.5) .. (b1);
\draw[thick] (a2) .. controls +(0,-0.5) and +(-.5,0) .. (b3);
\draw[thick] (a3) .. controls +(0,-0.5) and +(0,0.5) .. (b3);
\draw[thick] (a3) .. controls +(.25,-0.5) and +(.25,0.5) .. (b3);
\draw[thick] (a3) .. controls +(-.25,-0.5) and +(-.25,0.5) .. (b3);
\draw[thick] (a4) .. controls +(0,-0.5) and +(0,0.5) .. (b2);
\end{tikzpicture}
    \qquad\qquad
    Y = \begin{bmatrix}
        0 & 0 & 1 & 1 \\
        0 & 0 & 0 & 1 \\
        1 & 0 & 0 & 0 \\
        0 & 0 & 0 & 0
    \end{bmatrix}
    =
    \begin{tikzpicture}[scale=.5,every node/.style={scale=.75},baseline=5pt]

\def\n{4}

\foreach \i in {1,...,\n} {
    \node[draw, fill=black, circle, inner sep=1.5pt] (a\i) at (\i, 1) {};
    \node[above=2pt] at (a\i) {\i};
}

\foreach \i in {1,...,\n} {
    \node[draw, fill=black, circle, inner sep=1.5pt] (b\i) at (\i, 0) {};
    \node[below=2pt] at (b\i) {\i};
}

\draw[thick] (a1) .. controls +(0,-0.5) and +(-.5,0.5) .. (b3);
\draw[thick] (a1) .. controls +(0.5,-0.5) and +(-.5,+0.5) .. (b4);
\draw[thick] (a2) .. controls +(0.5,-0.5) and +(0,0.5) .. (b4);
\draw[thick] (a3) .. controls +(0,-0.5) and +(0,0.5) .. (b1);

\end{tikzpicture}
    \]
We visualize the ordered pair $(X,Y)$ by stacking the kelp beds as follows (where we suppress the vertex labels to reduce clutter):
\[
(X,Y) = \:
\begin{tikzpicture}[scale=.5,every node/.style={scale=.75},baseline=-3pt]
    \def\n{4}
    \foreach \i in {1,...,\n} {
    \node[draw, fill=black, circle, inner sep=1.5pt] (a\i) at (\i, 1) {};
}
    \foreach \i in {1,...,\n} {
    \node[draw, fill=black, circle, inner sep=1.5pt] (b\i) at (\i, 0) {};
}
    \foreach \i in {1,...,\n} {
    \node[draw, fill=black, circle, inner sep=1.5pt] (c\i) at (\i, -1) {};
}
\draw[thick] (a2) .. controls +(0,-0.5) and +(0,0.5) .. (b1);
\draw[thick] (a2) .. controls +(0,-0.5) and +(-.5,0) .. (b3);
\draw[thick] (a3) .. controls +(0,-0.5) and +(0,0.5) .. (b3);
\draw[thick] (a3) .. controls +(.25,-0.5) and +(.25,0.5) .. (b3);
\draw[thick] (a3) .. controls +(-.25,-0.5) and +(-.25,0.5) .. (b3);
\draw[thick] (a4) .. controls +(0,-0.5) and +(0,0.5) .. (b2);
\draw[thick] (b1) .. controls +(0,-0.5) and +(-.5,0.5) .. (c3);
\draw[thick] (b1) .. controls +(0.5,-0.5) and +(-.5,+0.5) .. (c4);
\draw[thick] (b2) .. controls +(0.5,-0.5) and +(0,0.5) .. (c4);
\draw[thick] (b3) .. controls +(0,-0.5) and +(0,0.5) .. (c1);

\end{tikzpicture}
\]
We now compute the product $X \star Y$ by using Definition~\ref{def:Demazure extended}.
Following step (1), we obtain the following sub-beds $(X_\ell, Y_\ell)$, for $1 \leq \ell \leq w = 3$.
For each $\ell$, we highlight the kelps $x_\ell$ and $y_\ell$ in a distinct color (blue for $\ell=1$, red for $\ell = 2$, and green for $\ell = 3$):

\begin{center} 
\begin{tikzpicture}[scale=.5,baseline=5pt]
    \def\n{4}
    \foreach \i in {1,...,\n} {
    \node[draw, fill=black, circle, inner sep=1.1pt] (a\i) at (\i, 1) {};
}
    \foreach \i in {1,...,\n} {
    \node[draw, fill=black, circle, inner sep=1.1pt] (b\i) at (\i, 0) {};
}
    \foreach \i in {1,...,\n} {
    \node[draw, fill=black, circle, inner sep=1.1pt] (c\i) at (\i, -1) {};
}

\draw[thick,blue!70!black] (a4) .. controls +(0,-0.5) and +(0,0.5) .. (b2);
\draw[thick,blue!70!black] (b3) .. controls +(0,-0.5) and +(0,0.5) .. (c1);

\node at (2.5,-2) {$\ell = 1$};

\end{tikzpicture}
\hspace{30pt}
\begin{tikzpicture}[scale=.5,baseline=5pt]
    \def\n{4}
    \foreach \i in {1,...,\n} {
    \node[draw, fill=black, circle, inner sep=1.1pt] (a\i) at (\i, 1) {};
}
    \foreach \i in {1,...,\n} {
    \node[draw, fill=black, circle, inner sep=1.1pt] (b\i) at (\i, 0) {};
}
    \foreach \i in {1,...,\n} {
    \node[draw, fill=black, circle, inner sep=1.1pt] (c\i) at (\i, -1) {};
}
\draw[thick,blue!70!black] (a4) .. controls +(0,-0.5) and +(0,0.5) .. (b2);
\draw[thick,red] (a3) .. controls +(0,-0.5) and +(0,0.5) .. (b3);
\draw[thick,blue!70!black] (b3) .. controls +(0,-0.5) and +(0,0.5) .. (c1);
\draw[thick,red] (b2) .. controls +(0.5,-0.5) and +(0,0.5) .. (c4);

\node at (2.5,-2) {$\ell = 2$};

\end{tikzpicture}
\hspace{30pt}
\begin{tikzpicture}[scale=.5,baseline=5pt]
    \def\n{4}
    \foreach \i in {1,...,\n} {
    \node[draw, fill=black, circle, inner sep=1.1pt] (a\i) at (\i, 1) {};
}
    \foreach \i in {1,...,\n} {
    \node[draw, fill=black, circle, inner sep=1.1pt] (b\i) at (\i, 0) {};
}
    \foreach \i in {1,...,\n} {
    \node[draw, fill=black, circle, inner sep=1.1pt] (c\i) at (\i, -1) {};
}
\draw[thick,blue!70!black] (a4) .. controls +(0,-0.5) and +(0,0.5) .. (b2);
\draw[thick,red] (a3) .. controls +(0,-0.5) and +(0,0.5) .. (b3);
\draw[thick,green!70!black] (a2) .. controls +(0,-0.5) and +(0,0.5) .. (b1);
\draw[thick,blue!70!black] (b3) .. controls +(0,-0.5) and +(0,0.5) .. (c1);
\draw[thick,red] (b2) .. controls +(0.5,-0.5) and +(0,0.5) .. (c4);
\draw[thick,green!70!black] (b1) .. controls +(0,-0.5) and +(-.5,0.5) .. (c3);

\node at (2.5,-2) {$\ell = 3$};

\end{tikzpicture}
\end{center}
\end{example}

\noindent Now following step (2), we take the final pair $(X_3, Y_3)$ shown above, delete its middle row of vertices, and fuse together the kelp pairs of each color to obtain the product
\[
X \star Y = \begin{tikzpicture}[scale=.5,every node/.style={scale=.75},baseline=5pt]

\def\n{4}

\foreach \i in {1,...,\n} {
    \node[draw, fill=black, circle, inner sep=1.5pt] (a\i) at (\i, 1) {};
    \node[above=2pt] at (a\i) {\i};
}

\foreach \i in {1,...,\n} {
    \node[draw, fill=black, circle, inner sep=1.5pt] (b\i) at (\i, 0) {};
    \node[below=2pt] at (b\i) {\i};
}

\draw[thick,green!70!black] (a2) .. controls +(0,-0.5) and +(0,0.5) .. (b3);
\draw[thick,red] (a3) .. controls +(0,-0.5) and +(0,+0.5) .. (b4);
\draw[thick,blue!70!black] (a4) .. controls +(0,-0.5) and +(0,0.5) .. (b1);

\end{tikzpicture}
= \begin{bmatrix}
    0 & 0 & 0 & 0 \\
    0 & 0 & 1 & 0\\
    0 & 0 & 0 & 1\\
    1 & 0 & 0 & 0
\end{bmatrix}.
\]

Of course, in order to justify the term ``extended Demazure product,'' we should verify that upon restricting $\N^{n \times n}$ to the symmetric group $\mathfrak{S}_n$ (realized here as the permutation matrices), the product $\star$ does indeed recover the Demazure product on $\mathfrak{S}_n$.
This will follow directly from our main result (see the proof of Theorem~\ref{thm:main result}), combined with either Tiskin's~\cite{Tiskin}*{Thm.~4} or Pflueger's~\cite{Pflueger}*{eqn.~(2)} observation regarding the special case of $\mathfrak{S}_n$.

\section{The distance product of simple Monge matrices}
\label{sec:Monge}

In this section, we shift perspective to optimization theory, where we consider the set $\M^0_n$ consisting of all ${(n+1) \times (n+1)}$ nonnegative integer matrices which have the \emph{(simple) Monge property}.
In applications, it is typical to study the distance product of Monge matrices, denoted by $\odot$, and also known as the \emph{tropical product} or the \emph{min-plus product}.
This product endows $(\M^0_n, \odot)$ with a semigroup structure.
Since it is natural in this setting to consider both the max norm and the $L_{1,1}$ norm (which are integer-valued on $\M^0_n$), we give generating functions (Theorem~\ref{thm:growth series}) to enumerate the matrices in $\M^0_n$ with a given norm.
Our main method in doing this is to associate each Monge matrix to a matrix in $\N^{n \times n}$, namely its preimage under a certain bijection $\Phi$ (which will be the subject of our main result in Theorem~\ref{thm:main result}).
These generating functions can be viewed as a sort of growth series for $\M^0_n$ with respect to the given matrix norms.

\subsection{Monge matrices}

An $(n + 1) \times (n+1)$ nonnegative integer-valued matrix is said to have the \emph{Monge property} if it belongs to the set
\begin{equation}
    \label{Mn}
    \M_n \coloneqq \Big\{ A \in \N^{(n+1) \times (n+1)} : A_{ij} + A_{IJ} \leq A_{iJ} + A_{Ij} \text{ for all $i<I$ and $j<J$} \Big\}.
\end{equation}
(The reason for the $n+1$ will become apparent in~\eqref{M0n} below.)
It is well known, and straightforward to see by induction~\cite{Burkard}*{eqn.~(6)}, that $A$ is Monge if and only if every $2 \times 2$ block is Monge; hence the condition in~\eqref{Mn} may be replaced by
\begin{equation}
    \label{Monge condition contiguous}
    A_{ij}+A_{i+1, j+1} \leq A_{i,j+1} + A_{i+1,j} \text{ for all } 1 \leq i,j \leq n.
\end{equation}
We define the map ${\Delta : \M_n \longrightarrow \N^{(n+1) \times (n+1)}}$ sending each Monge matrix $A$ to its \emph{density matrix} $\Delta(A)$ given by
\[
\Delta(A)_{ij} = \begin{cases}
    A_{ij} + A_{i+1, j-1} - A_{i, j-1} - A_{i+1, j}, & i \neq n+1 \text{ and } j \neq 1,\\
    0 & \text{otherwise}.
\end{cases}
\]
As a toy example where $n=2$, we have
\[
\begin{bmatrix}
    8 & 5 & 6 \\
    7 & 3 & 1 \\
    13 & 5 & 1
\end{bmatrix}
\xmapsto{\Delta}
\begin{bmatrix}
    0 & 1 & 3 \\
    0 & 4 & 2 \\
    0 & 0 & 0 
\end{bmatrix}.
\]
Note that the nonzero entries of $\Delta(A)$ lie in its $n \times n$ upper-right block.
Nearly inverse to the map $\Delta$ is the map $\Sigma : \N^{(n+1) \times (n+1)} \longrightarrow  \M_n$ which sends a matrix $B$ to its \emph{distribution matrix} $\Sigma(B)$ given by
\begin{equation}
    \label{A Sigma}
    \Sigma(B)_{ij} = \sum_{\mathclap{\substack{i' \geq i,\\ j'\leq j \phantom{,}}}} B_{i' j'}.
\end{equation}
That is, each entry of $\Sigma(B)$ is the sum of all the entries in $B$ lying weakly southwest of the corresponding matrix position.
As another toy example where $n=2$, we have
\[
\begin{bmatrix}
    3 & 0 & 2\\
    1 & 4 & 0\\
    2 & 3 & 1
\end{bmatrix}
\xmapsto{\Sigma}
\begin{bmatrix}
   6 & 13 & 16 \\
   3 & 10 & 10 \\
   2 & 5 & 6
\end{bmatrix}.
\]

\subsection{The semigroup \texorpdfstring{$(\M^0_n, \odot)$}{} of simple Monge matrices}

We observe that $\Delta$ and $\Sigma$ are, in fact, mutually inverse if we restrict our attention to \emph{simple} Monge matrices, defined as follows.
Following Tiskin~\cite{Tiskin}*{Def.~3}, we say that a Monge matrix is \emph{simple} if its leftmost column and bottommost row consist of all zeros; we denote the subset of simple $(n+1) \times (n+1)$ Monge matrices by
\begin{equation}
    \label{M0n}
    \M^0_n \coloneqq \Big\{ A \in \M_n : A_{ij} = 0 \text{ if } i = n+1 \text{ or } j = 1 \Big\}.
\end{equation}
In retrospect, this explains the subscript $n$ (rather than $n+1$) which we used to decorate the notation in both~\eqref{Mn} and~\eqref{M0n}:
a simple $(n +1) \times (n+1)$ Monge matrix may as well be viewed as an $n \times n$ matrix which is padded with zeros along its lower-left boundary.
To make this precise, define the embedding
\begin{align}
    \label{L}
    \begin{split}
        {\rm L} : \N^{n \times n} & \longrightarrow \N^{(n+1) \times (n+1)},\\
        X & \longmapsto
        \scalebox{.5}{$\begin{bNiceArray}{c|ccc}
  0 & \Block{3-3}<\Huge>{X} \\
  \raisebox{2pt}{\vdots} \\
  0 \\
  \hline
  0 & 0 & \cdots & 0
\end{bNiceArray}$}
    \end{split}
\end{align}
by padding an $n \times n$ matrix with zeros along its lower-left boundary (in the shape of an L).
We then have the four bijections in the following diagram, where (upon restriction to the (co)domains below) $\Sigma$ and $\Delta$ are mutually inverse:

\begin{equation}
\label{dist density bijection}
    \begin{tikzpicture}[baseline]
    \node (N) at (0,0) {$\N^{n \times n}$};
    \node (A) at (5,0) {$\left\{ A \in \N^{(n+1) \times (n+1)} : A = \scalebox{.5}{$\begin{bNiceArray}{c|ccc}
  \phantom{*} & * & * & * \\
   & * & * & * \\
   & * & * & * \\
   \hline
   & & &
\end{bNiceArray}$}\right\}$};
\node (M) at (10,0) {$\M^0_n$};

\draw [->] (N) -- (A) node [midway,above=1pt,scale=.65] {${\rm L}$};
\draw [->] ([yshift=1mm]A.east) -- ([yshift=1mm]M.west) node [midway,above=1pt,scale=.75] {$\Sigma$};
\draw [<-] ([yshift=-1mm]A.east) -- ([yshift=-1mm]M.west) node [midway,below=1pt,scale=.65] {$\Delta$};
\path
    (N.south) edge[bend right=20,->] node [midway,above=1pt, scale=.65] {$\Phi \coloneqq \Sigma \circ {\rm L}$} (M.south);
    \end{tikzpicture}
\end{equation}

\noindent The composition $\Phi \coloneqq \Sigma \circ {\rm L}$ will be the subject of our main result (Theorem~\ref{thm:main result}), and so we record below an explicit formula to obtain the simple Monge matrix $\Phi(X)$ from $X$.

\begin{lemma}
\label{lemma:Phi}
Let $X \in \N^{n \times n}$, and let $\Phi : \N^{n \times n} \longrightarrow \M^0_n$ be the bijection depicted in~\eqref{dist density bijection}.
We have
\[
    \Phi(X) = \sum_{i,j=1}^n X_{i,-j} \underbrace{\left[\begin{array}{c|c}
        0 & \mathbf{1}_{i \times j} \\
        \hline
        0 & 0
    \end{array}\right]}_{(n+1) \times (n+1)},
\]
where the index $-j$ denotes the $j$th column from the right, and $\mathbf{1}_{i \times j}$ is the $i \times j$ matrix in which every entry is 1.
\end{lemma}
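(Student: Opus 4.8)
The plan is to use the additivity of $\Phi = \Sigma \circ {\rm L}$ and reduce the verification to elementary matrices, where both sides become transparent. Both the padding map ${\rm L}$ of~\eqref{L} and the distribution map $\Sigma$ of~\eqref{A Sigma} are manifestly additive on $\N$-valued matrices, so $\Phi$ is too; and the right-hand side of the claimed identity is the additive-in-$X$ expression $\sum_{i,j} X_{i,-j}\, M^{(i,j)}$, where $M^{(i,j)}$ denotes the fixed $(n+1)\times(n+1)$ block matrix displayed in the statement. Writing $X$ as a nonnegative combination of the elementary matrices $E_{a,c} \in \N^{n\times n}$ (a single $1$ in row $a$, column $c$), it therefore suffices to verify that $\Phi(E_{a,c}) = M^{(a,\,n+1-c)}$ for all $1 \le a, c \le n$, since $E_{a,c}$ has $(E_{a,c})_{i,-j}$ equal to $1$ exactly for $(i,j) = (a,\,n+1-c)$ and $0$ otherwise (the $c$-th column from the left being the $(n+1-c)$-th from the right).

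For this one elementary case I would simply unwind the two maps in turn. Padding $E_{a,c}$ with the L-shaped border of zeros shifts its single entry from column $c$ into column $c+1$ of the $(n+1)\times(n+1)$ frame, so ${\rm L}(E_{a,c})$ is the elementary matrix with a $1$ in position $(a,c+1)$ (which lies in the upper-right $n\times n$ block, as $1\le a\le n$ and $2\le c+1\le n+1$). Then by~\eqref{A Sigma}, the $(i,j)$-entry of $\Sigma$ applied to this matrix equals $1$ precisely when the position $(a,c+1)$ lies weakly southwest of $(i,j)$, i.e.\ when $i \le a$ and $j \ge c+1$, and $0$ otherwise; so the result is the $0/1$ matrix supported on the rectangle of rows $1,\dots,a$ and columns $c+1,\dots,n+1$, namely a solid block of $1$'s of dimensions $a \times (n+1-c)$ flush against the upper-right corner, with zeros everywhere else. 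That is exactly $M^{(a,\,n+1-c)}$, which completes the reduction and hence the proof.

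There is no genuine obstacle here: the argument is pure index bookkeeping, and the only subtle point is the single-column shift introduced by the L-padding, which is what produces the ``$-j$'' (equivalently the $n+1-c$) indexing in the final answer; getting this off-by-one right is essentially the entire content of the lemma. As an alternative to the linearity argument, one could also expand directly, writing $\Phi(X)_{ij} = \sum_{i'\ge i,\, j'\le j} {\rm L}(X)_{i'j'} = \sum_{i \le i' \le n}\ \sum_{2 \le j' \le j} X_{i',\,j'-1}$, and then re-indexing the inner sum by how far each column of $X$ sits from the right edge to recognize this as the asserted sum $\sum_{i,j} X_{i,-j}\,M^{(i,j)}$ of upper-right all-ones blocks weighted by the entries of $X$.
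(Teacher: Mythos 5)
Your proof is correct. It takes a mildly different route from the paper's: you decompose the \emph{input} $X$ into elementary matrices $E_{a,c}$, invoke additivity of $\Phi = \Sigma \circ {\rm L}$, and verify the single case $\Phi(E_{a,c}) = \left[\begin{smallmatrix} 0 & \mathbf{1}_{a \times (n+1-c)} \\ 0 & 0\end{smallmatrix}\right]$, correctly tracking the one-column shift caused by the L-padding. The paper instead expands the \emph{output} entrywise, writing $\Phi(X) = \sum_{i,j}\Phi(X)_{ij}E_{ij}$, substituting the definition of $\Sigma$ from~\eqref{A Sigma}, reversing the column index, and interchanging the two double sums so that the inner sum of $E_{i,-j}$'s assembles into the upper-right all-ones block; that interchange of summation is precisely the additivity you invoke, so the two arguments have the same mathematical content, just organized dually (yours localizes the index bookkeeping to one rank-one case, the paper's keeps it in a single chain of equalities). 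Indeed, the ``alternative direct expansion'' you sketch at the end is essentially the paper's proof verbatim.
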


\begin{proof}
    Let $E_{ij}$ denote the $(n+1) \times (n+1)$ matrix with 1 in the $(i,j)$ position and zeros elsewhere.
    We have
    \begin{align*}
        \Phi(X)  &= \sum_{i,j=1}^{n+1} \Phi(X)_{ij} E_{ij}\\
        &= \sum_{i,j=1}^{n+1} 
        \Sigma \left( {\rm L}(X) \right)_{ij} E_{ij} & \text{since $\Phi \coloneqq \Sigma \circ {\rm L}$ in~\eqref{dist density bijection}} \\
        &= \sum_{i,j=1}^{n+1} \Bigg(\sum_{\substack{i' \geq i, \\ j' \leq j \phantom{,}}} {\rm L}(X)_{i'j'}\Bigg) E_{ij} & \text{by~\eqref{A Sigma}}\\
        &= \sum_{i,j=1}^{n+1} \Bigg(\sum_{\substack{i' \geq i, \\ j' \geq j \phantom{,}}} {\rm L}(X)_{i',-j'}\Bigg) E_{i,-j} & \text{reversing the column index}\\
        &= \sum_{i',j'=1}^{n+1} {\rm L}(X)_{i', -j'} \Bigg(\sum_{\substack{i \leq i', \\ j \leq j' \phantom{,}}}  E_{i,-j} \Bigg) & \text{interchanging the sums}\\
        &= \sum_{i',j'=1}^{n} X_{i', -j'} \left[\begin{array}{c|c}
        0 & \mathbf{1}_{i' \times j'} \\
        \hline
        0 & 0
    \end{array}\right],
    \end{align*}
    where the last line follows from observing in~\eqref{L} that ${\rm L}(X)_{i',-j'} = X_{i', -j'}$ if $1 \leq i', j' \leq n$, and 0 otherwise.
\end{proof}

\begin{remark} 
The simple Monge matrices are fundamental in the following sense.
(The discussion here is just a more detailed treatment following from well-known properties of Monge matrices; see~\cite{Burkard}*{Lemma~2.1} or~\cite{Atallah}*{Lemma~1}, for example.)
Every Monge matrix $A \in \M_n$ can be decomposed uniquely in the form
\begin{equation*}
    \label{Monge decomp}
    A = \underbrace{\Sigma \big( \Delta(A) \big)}_{\text{simple matrix}} \;\; + \underbrace{S(A)}_{\text{sum matrix}},
\end{equation*}
where $S(A) \in \mathbb{Z}^{(n+1) \times (n+1)}$ is determined by the entries along the lower-left boundary of $A$:
\[
S(A)_{ij} = A_{i,1} + A_{n+1, j} - A_{n+1, 1}.
\]
The matrix $S(A)$ is called a \emph{sum matrix} (following~\cite{Burkard}*{p.~100}) because it is the addition table of the two vectors $(A_{1,1}, \ldots, A_{n+1,1}) \in \N^{n+1}$ and $(0, A_{n+1, 2} - A_{n+1,1}, \ldots, A_{n+1, n+1} - A_{n+1,1}) \in \mathbb{Z}^{n+1}$.  
In this sense, the component $S(A)$ contains relatively little information about a Monge matrix $A$.
Indeed, it is the simple part of $A$ that contains most of the information, and which approximates $A$ up to some combination of constant rows and constant columns.
\end{remark}

In applications of Monge matrices, it is common to study the \emph{distance product}.
This is just the standard matrix product, where ordinary addition and multiplication are replaced by the operations
\begin{align*}
    x \oplus y &\coloneqq \min\{x,y\},\\
    x \otimes y &\coloneqq x + y,
\end{align*}
respectively.
We will use the symbol $\odot$ for the distance product.
Thus the product $A \odot B = C$ is defined by
\begin{equation}
\label{min plus matrix product}
    C_{ij} = \bigoplus_k A_{ik} \otimes B_{kj} = \min_k\{A_{ik} + B_{kj}\}.
\end{equation}

It is well known that $\M_n$ is closed under the distance product $\odot$, which is associative; see, for example,~\cite{Atallah}*{\S4}, where the authors use the term ``convex matrix'' rather than ``Monge matrix.''
Therefore $(\M_n, \odot)$ is a semigroup
(but not a monoid, since the identity matrix would require $0$'s on the diagonal and an ``$\infty$'' element elsewhere).
Since $0 \odot 0 = 0$ (where $0$ denotes the zero matrix in appropriate dimensions), it is easy to see (by block matrix multiplication) that the subset $\M^0_n$ of simple Monge matrices is also closed under $\odot$, and so $(\M^0_n, \odot)$ also forms a semigroup.

\subsection{Norm growth series of \texorpdfstring{$\M^0_n$}{the Monge semigroup}}

It is natural to equip matrices in $\M^0_n$ with  the max norm or the $L_{1,1}$ norm, defined as follows:
\begin{align*}
    \| A \|_{\max} &\coloneqq \max_{i,j} A_{ij},\\
    \|A\|_{1,1} &\coloneqq \sum_{i,j} A_{ij}.
\end{align*}
(Note that since each $A \in \M^0_n$ has nonnegative entries, there is no need for absolute values in the definitions above.)
It is straightforward to verify from the definition~\eqref{min plus matrix product} that both norms above are submultiplicative with respect to the distance product, meaning that $\| A \odot B \| \leq \|A\| \otimes \|B\|$.
Since $\otimes$ here denotes ordinary addition, this means that both the max norm and the $L_{1,1}$ norm induce a filtration $\mathscr{F}^0 \subseteq \mathscr{F}^1 \subseteq \mathscr{F}^2 \subseteq \cdots$ on $\M^0_n$, in the sense that
\[
    \mathscr{F}^k \odot \mathscr{F}^\ell \subseteq \mathscr{F}^{k+\ell},
\]
where
\begin{equation} 
    \label{Fk}
    \mathscr{F}^k = \mathscr{F}^k_{\bullet}(n) \coloneqq \Big\{ A \in \M^0_n : \|A\|_\bullet \leq k \Big\},
\end{equation}
with the symbol $\bullet$ as a placeholder for either ``$\max$'' or ``$1,1$.''

In order to understand the growth of these filtered components $\mathscr{F}^k$, it suffices to determine the number of matrices whose norm \emph{equals} a given integer, since then we can recover the growth of the $\mathscr{F}^k$ by taking partial sums.
Hence for each $k \in \N$ we define
\begin{equation}
    \label{Gk}
    \mathscr{G}^k_\bullet(n) \coloneqq \Big\{ A \in \M^0_n : \|A\|_\bullet = k\Big\},
\end{equation}
where again the symbol $\bullet$ is a placeholder for either ``$\max$'' or ``$1,1$.''
(The notation $\mathscr{G}$ is meant to evoke the associated \emph{graded} ring of a filtration.)
The cardinalities of the components $\mathscr{G}^k_\bullet(n)$ can be encoded in a formal power series in the indeterminate $q$, where the coefficient of $q^k$ equals $|\mathscr{G}^k_\bullet(n)|$.
We will call this the \emph{growth series} of $\M^0_n$ with respect to the norm $\| \;\; \|_\bullet$, as written out below in two equivalent forms:
\[
    \sum_{k=0}^\infty \big| \mathscr{G}^k_{\bullet}(n)\big| q^k = \sum_{A \in \M^0_n} q^{\|A\|_\bullet}.
\]
In the following theorem we give a closed-form generating function for the growth series of $\M^0_n$ with respect to each of the two norms mentioned above.

\begin{theorem}
    \label{thm:growth series}

    Let $\mathscr{G}^k_\bullet(n)$ be as defined in~\eqref{Gk}.
    We have the following growth series of $\M^0_n$:

    \begin{enumerate}
        \item $\displaystyle \sum_{k=0}^\infty \Big| \mathscr{G}^k_{\max}(n) \Big| q^k = \frac{1}{(1-q)^{n^2}}$.

        \item $\displaystyle \sum_{k=0}^\infty \Big| \mathscr{G}^k_{1,1}(n) \Big| q^k = \prod_{i,j=1}^n \frac{1}{1-q^{ij}}$.
    \end{enumerate}
    
\end{theorem}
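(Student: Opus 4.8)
The plan is to exploit the bijection $\Phi : \N^{n \times n} \longrightarrow \M^0_n$ of~\eqref{dist density bijection}, together with the explicit formula in Lemma~\ref{lemma:Phi}, to transport each growth series back to a count over matrices $X \in \N^{n \times n}$ weighted by an appropriate statistic. Since $\Phi$ is a bijection, we have
\[
\sum_{A \in \M^0_n} q^{\|A\|_\bullet} = \sum_{X \in \N^{n \times n}} q^{\|\Phi(X)\|_\bullet},
\]
so everything reduces to expressing $\|\Phi(X)\|_{\max}$ and $\|\Phi(X)\|_{1,1}$ in terms of the entries of $X$. By Lemma~\ref{lemma:Phi}, the entry $\Phi(X)_{i,-j}$ (the entry $i$ rows from the top and $j$ columns from the right, within the upper-right $n \times n$ block) is
\[
\Phi(X)_{i,-j} = \sum_{\substack{i' \leq i \\ j' \leq j}} X_{i',-j'},
\]
i.e.\ a partial sum over the northeast $i \times j$ subrectangle of $X$ (reading columns from the right). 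This is monotincreasing in both $i$ and $j$, so the maximum entry of $\Phi(X)$ is attained at $(i,j) = (n,n)$, giving $\|\Phi(X)\|_{\max} = \sum_{i,j} X_{ij} = \#X = \|X\|_{1,1}$.

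For part (1), it follows that
\[
\sum_{A \in \M^0_n} q^{\|A\|_{\max}} = \sum_{X \in \N^{n \times n}} q^{\|X\|_{1,1}} = \prod_{i,j=1}^n \Bigl( \sum_{X_{ij} = 0}^\infty q^{X_{ij}} \Bigr) = \frac{1}{(1-q)^{n^2}},
\]
where the middle equality is the standard fact that $\|X\|_{1,1}$ is additive over the $n^2$ independent entries of $X$, each ranging freely over $\N$. This disposes of the first generating function.

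For part (2), I would compute $\|\Phi(X)\|_{1,1} = \sum_{i,j=1}^n \Phi(X)_{i,-j}$. Interchanging the order of summation in the double partial sum above, each entry $X_{i',-j'}$ is counted once for every pair $(i,j)$ with $i' \leq i \leq n$ and $j' \leq j \leq n$, i.e.\ with multiplicity $(n-i'+1)(n-j'+1)$. Re-indexing $a = n-i'+1$ and $b = n-j'+1$, this says
\[
\|\Phi(X)\|_{1,1} = \sum_{a,b=1}^n a\,b\,\, X_{n-a+1,\,-(n-b+1)},
\]
so $\|\Phi(X)\|_{1,1}$ is a sum over the $n^2$ entries of $X$ (re-indexed), where the entry in ``position $(a,b)$'' contributes with weight $ab$ per unit. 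Since the entries of $X$ are independent nonnegative integers, the generating function factors as
\[
\sum_{X \in \N^{n \times n}} q^{\|\Phi(X)\|_{1,1}} = \prod_{a,b=1}^n \Bigl( \sum_{m=0}^\infty q^{ab\,m} \Bigr) = \prod_{a,b=1}^n \frac{1}{1-q^{ab}} = \prod_{i,j=1}^n \frac{1}{1-q^{ij}},
\]
which is the claimed formula.

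The only genuinely substantive step is the bookkeeping in the second interchange of summation — correctly determining that $X_{i',-j'}$ appears with multiplicity $(n-i'+1)(n-j'+1)$ in $\sum_{i,j} \Phi(X)_{i,-j}$, and checking the re-indexing is a genuine bijection of $[n] \times [n]$ onto itself so that the product runs over $ij$ for $1 \le i,j \le n$ as stated. Everything else is the observation that $\Phi$ is a bijection (already established in~\eqref{dist density bijection}) plus the elementary geometric-series factorization over independent matrix entries. I do not anticipate any real obstacle; the proof is essentially a change of variables under $\Phi$.
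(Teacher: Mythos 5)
Your strategy is the same as the paper's: pull the growth series back through the bijection $\Phi$ of~\eqref{dist density bijection}, express the two norms of $\Phi(X)$ in terms of the entries of $X$, and factor the resulting sum over the $n^2$ independent entries; both final generating functions are correct. However, your intermediate formula for the entries of $\Phi(X)$ is reversed, and this makes one displayed identity false as stated. Since $\Sigma$ in~\eqref{A Sigma} sums weakly \emph{southwest} and you then read the column index from the right, Lemma~\ref{lemma:Phi} gives $\Phi(X)_{i,-j}=\sum_{i'\geq i,\ j'\geq j}X_{i',-j'}$ (the block $\bigl[\begin{smallmatrix}0 & \mathbf{1}_{i'\times j'}\\ 0 & 0\end{smallmatrix}\bigr]$ covers the positions $(i,-j)$ with $i\leq i'$ and $j\leq j'$), not $\sum_{i'\leq i,\ j'\leq j}X_{i',-j'}$. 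Consequently: (i) the maximum entry of $\Phi(X)$ sits at the upper-right corner, i.e.\ at $(i,j)=(1,1)$ in your from-top/from-right indexing rather than at $(n,n)$ --- harmless for part (1), since either way $\|\Phi(X)\|_{\max}=\#X$; (ii) in part (2) the entry $X_{i',-j'}$ occurs in $\sum_{i,j}\Phi(X)_{i,-j}$ with multiplicity $i'j'$, so $\|\Phi(X)\|_{1,1}=\sum_{i',j'}i'j'\,X_{i',-j'}$, exactly as the paper reads off from $\bigl\|\bigl[\begin{smallmatrix}0 & \mathbf{1}_{i'\times j'}\\ 0 & 0\end{smallmatrix}\bigr]\bigr\|_{1,1}=i'j'$. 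Your identity $\|\Phi(X)\|_{1,1}=\sum_{a,b}ab\,X_{n-a+1,-(n-b+1)}$ is false pointwise: for $X$ with a single $1$ in the top-right position $(1,-1)$ one has $\|\Phi(X)\|_{1,1}=1$, while your formula assigns weight $n^2$. You land on the right product only because the reversal $(i',j')\mapsto(n+1-i',n+1-j')$ permutes $[n]\times[n]$ and preserves the multiset of weights $\{i'j'\}$, so $\prod_{a,b}(1-q^{ab})^{-1}=\prod_{i,j}(1-q^{ij})^{-1}$. So the conclusions stand and the route is essentially the paper's, but you should fix the direction of the partial sums (or simply invoke Lemma~\ref{lemma:Phi} verbatim, as the paper does) so that the intermediate identities are actually true.
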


\begin{proof} In both cases, we will exploit the bijection $\Phi : \N^{n \times n} \longrightarrow \M^0_n$ in~\eqref{dist density bijection}, in order to pull Monge matrices back to their preimages in $\N^{n \times n}$, where calculations are more straightforward.

\begin{enumerate}

\item Let $X \in \N^{n \times n}$.
By~\eqref{A Sigma} and~\eqref{dist density bijection}, the maximum entry in $\Phi(X)$ is the upper-right entry, which is the sum of all entries in $X$.
Hence $\| \Phi(X) \|_{\max} = \sum_{i,j} X_{ij}$.
We therefore have
\begin{align*}
    \sum_{k=0}^\infty \Big| \mathscr{G}^k_{\max}(n) \Big| q^k &= \sum_{A \in \M^0_n} q^{\|A\|_{\max}}\\
    &= \sum_{X \in \N^{n \times n}} q^{\| \Phi(X) \|_{\max}} & \text{by~\eqref{dist density bijection}}\\
    &= \sum_{X \in \N^{n \times n}} q^{\sum_{i,j} X_{ij}}\\
    &= \sum_{X \in \N^{n \times n}} \left(\prod_{i,j=1}^n q^{X_{ij}}\right)\\
    &= \prod_{i,j=1}^n \left( \sum_{\ell \in \N} q^\ell \right) \\
    &= \prod_{i,j=1}^n \frac{1}{1-q} \\
    &= \frac{1}{(1-q)^{n^2}}.
\end{align*}

\item We have

    \begin{align*}
        \label{first line in proof}
        \sum_{k=0}^\infty \Big| \mathscr{G}^k_{1,1}(n) \Big| q^k &= \sum_{A \in \M^0_n} q^{\|A\|_{1,1}} \\
        &= \sum_{X \in \N^{n \times n}} q^{\|\Phi(X)\|_{1,1}} & \text{by~\eqref{dist density bijection}}\\
        &= \sum_{X \in \N^{n \times n}} q^{\left\|\sum_{i,j} X_{i,-j} \scalebox{.5}{$\left[\begin{array}{c|c}
        0 & \mathbf{1}_{i \times j} \\
        \hline
        0 & 0
    \end{array}\right]$} \right\|_{1,1}} & \text{by Lemma~\ref{lemma:Phi}}\\
    &= \sum_{X \in \N^{n \times n}} q^{\sum_{i,j} ij \cdot X_{i,-j}} & \text{since $\left\|\raisebox{2pt}{\scalebox{.5}{$\left[\begin{array}{c|c}
        0 & \mathbf{1}_{i \times j} \\
        \hline
        0 & 0
    \end{array}\right]$}} \right\|_{1,1} = ij$}\\
    &= \sum_{X \in \N^{n \times n}} \Bigg(\prod_{i,j=1}^n (q^{ij})^{X_{i,-j}}\Bigg)\\
    &= \prod_{i,j=1}^n \left(\sum_{\ell \in \N} (q^{ij})^{\ell} \right)\\
    &= \prod_{i,j=1}^n \frac{1}{1-q^{ij}}. &\qedhere
    \end{align*}

    \end{enumerate}
\end{proof}

\begin{cor}
    \label{cor:growth series}
    Let $\mathscr{F}^k_\bullet (n)$ be as defined in~\eqref{Fk}.
    We have the following generating functions:

    \begin{enumerate}
        \item $\displaystyle\sum_{k=0}^\infty \Big| \mathscr{F}^k_{\max}(n) \Big| q^k = \frac{1}{(1-q)^{n^2 +1}}$.

        \item $\displaystyle\sum_{k=0}^\infty \Big| \mathscr{F}^k_{1,1}(n) \Big| q^k = \frac{1}{1-q}\prod_{i,j=1}^n \frac{1}{1-q^{ij}}$.
    \end{enumerate}
\end{cor}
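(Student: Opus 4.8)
The plan is to deduce the corollary directly from Theorem~\ref{thm:growth series}, using the fact that the filtered pieces $\mathscr{F}^k_\bullet(n)$ are precisely the disjoint unions of the graded pieces $\mathscr{G}^j_\bullet(n)$ for $j \leq k$. First I would unwind the definitions~\eqref{Fk} and~\eqref{Gk}: a matrix $A \in \M^0_n$ satisfies $\|A\|_\bullet \leq k$ if and only if $\|A\|_\bullet = j$ for a unique $j \in \{0, 1, \ldots, k\}$, so that
\[
\big| \mathscr{F}^k_\bullet(n) \big| = \sum_{j=0}^k \big| \mathscr{G}^j_\bullet(n) \big|.
\]
In other words, the coefficient sequence of the $\mathscr{F}$-series is the sequence of partial sums of the coefficient sequence of the $\mathscr{G}$-series.

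Next I would invoke the elementary fact that passing to partial sums corresponds, at the level of formal power series, to multiplication by $\frac{1}{1-q}$:
\[
\sum_{k=0}^\infty \Big| \mathscr{F}^k_\bullet(n)\Big| q^k = \sum_{k=0}^\infty \left( \sum_{j=0}^k \big| \mathscr{G}^j_\bullet(n)\big| \right) q^k = \frac{1}{1-q} \sum_{k=0}^\infty \big|\mathscr{G}^k_\bullet(n)\big| q^k.
\]
Substituting the two closed forms from Theorem~\ref{thm:growth series} then gives the claimed identities: in the max-norm case, $\frac{1}{1-q} \cdot \frac{1}{(1-q)^{n^2}} = \frac{1}{(1-q)^{n^2+1}}$, proving part~(1); and in the $L_{1,1}$ case, $\frac{1}{1-q}\prod_{i,j=1}^n \frac{1}{1-q^{ij}}$, proving part~(2).

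There is essentially no obstacle to overcome here; the corollary is a purely formal consequence of Theorem~\ref{thm:growth series} together with the partial-sum/generating-function dictionary. The only point one might pause over is whether every $A \in \M^0_n$ is accounted for exactly once (i.e.\ that the filtration $\mathscr{F}^0 \subseteq \mathscr{F}^1 \subseteq \cdots$ exhausts $\M^0_n$), but this is immediate since $\|A\|_\bullet$ is a well-defined nonnegative integer for each such $A$.
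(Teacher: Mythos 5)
Your proposal is correct and follows exactly the paper's own argument: express $\big|\mathscr{F}^k_\bullet(n)\big|$ as the partial sum $\sum_{j=0}^k \big|\mathscr{G}^j_\bullet(n)\big|$ and multiply the generating functions of Theorem~\ref{thm:growth series} by $\frac{1}{1-q}$. Nothing further is needed.
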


\begin{proof}
    By the definition~\eqref{Fk}, it is clear that $|\mathscr{F}^k_\bullet(n)| = \sum_{i=0}^k |\mathscr{G}^i_\bullet(n)|$.
    Hence the generating functions in this corollary are obtained by taking the partial sums of the coefficients in Theorem~\ref{thm:growth series}.
    It is a standard fact that the generating function for partial sums is obtained by multiplying the original generating function by $\frac{1}{1-q}$.
\end{proof}

\subsection{Combinatorial interpretations}

The generating functions in part (1) of Theorem~\ref{thm:growth series} and Corollary~\ref{cor:growth series} make it immediately clear that the max norm cardinalities in $\M^0_n$, along with their partial sums, are given by binomial coeffients:
\[
\Big| \mathscr{G}^k_{\max}(n) \Big| = \binom{n^2 + k -1}{k}, \qquad \Big| \mathscr{F}^k_{\max}(n) \Big| = \binom{n^2 + k}{k}.
\]
Moreover, there is an elegant way to interpret the elements of $\mathscr{F}^k_{\max}(n)$ in terms of $(n,n,k)$-boxed plane partitions.
Recall that in general, an \emph{$(a,b,c)$-boxed plane partition} is a Young tableau with at most $a$ rows and at most $b$ columns, where the entries are taken from the set $\{1, \ldots, c\}$ and weakly decrease along rows and columns.
Let
\[
    \mathscr{P}(a,b,c) \coloneqq \Big\{ \text{$(a,b,c)$-boxed plane partitions}\Big\}.
\]
A plane partition can also be depicted as a three-dimensional arrangement of unit cubes, where the entry in each box of the plane partition is the number of cubes stacked on that box; via this depiction, an $(a,b,c)$-boxed plane partition is one that fits inside a rectangular prism of length $a$, width $b$, and height $c$.
The example below shows the same plane partition in its two guises described above, namely the two-dimensional (tableau) and three-dimensional (unit cube) depiction:
\begin{equation}
\label{PP pictures}
\ytableausetup{centertableaux}
\ytableaushort{8521,531,32,11} \qquad \newcounter{x}
\newcounter{y}
\newcounter{z}
\newcommand\xaxis{210}
\newcommand\yaxis{-30}
\newcommand\zaxis{90}
\newcommand\topside[3]{
  \fill[fill=white, draw=black,shift={(\xaxis:#1)},shift={(\yaxis:#2)},
  shift={(\zaxis:#3)}] (0,0) -- (30:1) -- (0,1) --(150:1)--(0,0);
}
\newcommand\leftside[3]{
  \fill[fill=lightgray, draw=black,shift={(\xaxis:#1)},shift={(\yaxis:#2)},
  shift={(\zaxis:#3)}] (0,0) -- (0,-1) -- (210:1) --(150:1)--(0,0);
}
\newcommand\rightside[3]{
  \fill[fill=white, draw=black,shift={(\xaxis:#1)},shift={(\yaxis:#2)},
  shift={(\zaxis:#3)}] (0,0) -- (30:1) -- (-30:1) --(0,-1)--(0,0);
}
\newcommand\cube[3]{
  \topside{#1}{#2}{#3} \leftside{#1}{#2}{#3} \rightside{#1}{#2}{#3}
}
\newcommand\planepartition[1]{
 \setcounter{x}{-1}
  \foreach \a in {#1} {
    \addtocounter{x}{1}
    \setcounter{y}{-1}
    \foreach \b in \a {
      \addtocounter{y}{1}
      \setcounter{z}{-1}
      \foreach \c in {1,...,\b} {
        \addtocounter{z}{1}
        \cube{\value{x}}{\value{y}}{\value{z}}
      }
    }
  }
}
\begin{tikzpicture}[scale=.4,baseline=(current bounding box.center)]
\planepartition{{8,5,2,1},{5,3,1},{3,2},{1,1}}
\node at (-4,-3) {south};
\node at (4,-3) {east};
\end{tikzpicture}
\end{equation}

In the three-dimensional depiction, each row of a plane partition becomes a \emph{wall} running from west to east (and weakly decreasing in height as it does).
To an observer standing to the south, each wall partially blocks the wall behind it (i.e., the wall immediately to its north, i.e., the row directly above it in the two-dimensional diagram).
For each wall, we say that its \emph{southern face} is the collection of squares that are visible to this observer, i.e., the squares that are not blocked by the wall in front of it.
In~\eqref{PP pictures}, we have shaded the southern faces of each of the four walls.
We say the southern face of a wall is \emph{weakly decreasing} if the heights of its columns form a weakly decreasing sequence from west to east.
In~\eqref{PP pictures}, all four southern faces are weakly decreasing; in particular, the four sequences of heights are
\[
(3,2,1,1), \quad (2,1,1), \quad (2,1), \quad (1,1).
\]

\begin{prop}
\label{prop:plane partitions}
    Let $\mathscr{F}^k_{\max}(n)$ be as defined in~\eqref{Fk}.
    The number of matrices in $\mathscr{F}^k_{\max}(n)$, namely $\binom{n^2 + k}{k}$, equals the number of $(n,n,k)$-boxed plane partitions whose southern faces are all weakly decreasing.
\end{prop}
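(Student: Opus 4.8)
The plan is to produce a bijection between $\N^{n\times n}$ and the set of plane partitions with at most $n$ rows and at most $n$ columns all of whose southern faces are weakly decreasing, and then to check that it sends the $L_{1,1}$ statistic $\sum_{i,j}X_{ij}$ to the largest entry of the plane partition. Given $X\in\N^{n\times n}$, I would associate the array $\pi=\pi(X)$ with
\[
\pi_{r,c}=\sum_{\substack{r'\ge r\\ c'\ge c}}X_{r',c'},\qquad 1\le r,c\le n
\]
(a south-east partial-sum transform; it is essentially a column reflection of the upper-right $n\times n$ block of $\Phi(X)$, but it is cleaner to work with it directly). First I would verify that $\pi$ is a genuine plane partition with at most $n$ rows and columns: it is weakly decreasing along rows and down columns because enlarging the range of summation only increases the sum, and its support lies in the $n\times n$ grid. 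Moreover $\pi_{1,1}=\sum_{r,c}X_{r,c}=\|\Phi(X)\|_{\max}$, where the last equality is exactly the computation in the proof of part~(1) of Theorem~\ref{thm:growth series}; hence $\pi(X)$ is $(n,n,k)$-boxed precisely when $\|\Phi(X)\|_{\max}\le k$, i.e.\ when $\Phi(X)\in\mathscr{F}^k_{\max}(n)$.

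Next I would compute the southern faces. Writing $d_{r,c}=\pi_{r,c}-\pi_{r+1,c}$ for the height of the $c$-th column of the southern face of wall $r$ (with the convention $\pi_{n+1,c}=0$), a one-line telescoping gives $d_{r,c}=\sum_{c'\ge c}X_{r,c'}$, so that $d_{r,c}-d_{r,c+1}=X_{r,c}\ge 0$. Thus every southern face of $\pi(X)$ is weakly decreasing, and $X$ is recovered from $\pi$ by the density-type formula
\[
X_{r,c}=\pi_{r,c}-\pi_{r,c+1}-\pi_{r+1,c}+\pi_{r+1,c+1}
\]
(padding $\pi$ by zeros for indices exceeding $n$). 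This exhibits $X\mapsto\pi(X)$ as injective with a one-sided inverse. To see that the image is exactly the plane partitions (of this shape) with weakly decreasing southern faces, note that applying this formula to an arbitrary such $\pi$ returns a matrix whose $(r,c)$ entry is $d_{r,c}-d_{r,c+1}$, and this is $\ge 0$ precisely because the southern face of wall $r$ is weakly decreasing; equivalently, $\pi_{r,c}+\pi_{r+1,c+1}\ge\pi_{r,c+1}+\pi_{r+1,c}$ on every contiguous $2\times 2$ block, a ``reverse-Monge'' supermodularity condition. Some boundary bookkeeping is needed: the southern face of the last wall $r=n$ is the entire wall, so its weak decrease is automatic from $\pi$ being a plane partition, and columns past the length of a row contribute only trailing zeros, which are consistent with weak decrease. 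I expect this translation --- ``weakly decreasing southern face'' $\iff$ reverse-Monge $2\times 2$ inequalities, together with the edge cases --- to be the only subtle point; everything else is routine telescoping.

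Combining the two displays, $X\mapsto\pi(X)$ is a bijection from $\N^{n\times n}$ onto the plane partitions with at most $n$ rows and columns and all southern faces weakly decreasing, which (using the boxed $\iff \mathscr{F}^k_{\max}$ observation of the first paragraph together with the bijectivity of $\Phi:\N^{n\times n}\to\M^0_n$) restricts to a bijection between $\mathscr{F}^k_{\max}(n)$ and the $(n,n,k)$-boxed such plane partitions. Since $|\mathscr{F}^k_{\max}(n)|=\binom{n^2+k}{k}$ by part~(1) of Corollary~\ref{cor:growth series}, this proves the proposition.
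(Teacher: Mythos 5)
Your proof is correct and takes essentially the same route as the paper: the paper's bijection $\psi$ (horizontal reflection plus zero-padding, tested against the Monge condition~\eqref{Monge condition contiguous}) becomes, after composing with $\Phi$ and a harmless column reversal, exactly your southeast partial-sum map, and the crux in both arguments is the same identity that the southern-face columns are the differences $\pi_{r,c}-\pi_{r+1,c}$, so that weak decrease of the faces is precisely the $2\times 2$ supermodular (reflected Monge) inequality. The only cosmetic difference is that you build the bijection starting from $\N^{n\times n}$ and check nonnegativity of second differences directly, while the paper starts from plane partitions and uses Lemma~\ref{lemma:Phi} for surjectivity.
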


\begin{proof}
    We give a bijective proof.
    Note that each plane partition $\pi \in \mathscr{P}(n,n,k)$ can be viewed as a matrix in $\N^{n \times n}$ with entries in $\{0, \ldots, k\}$, where the entries of $\pi$ are the matrix entries in the corresponding positions (with zeros elsewhere).
    Let $\psi: \mathscr{P}(n,n,k) \longrightarrow \N^{(n+1) \times (n+1)}$ be the injective map that takes a plane partition, reflects it horizontally, and then pads it with zeros along the lower-left boundary (via the map ${\rm L}$ in~\eqref{L}).
    Then we have $\psi(\mathscr{P}(n,n,k)) \supseteq \mathscr{F}^k_{\max}(n)$, since by Lemma~\ref{lemma:Phi} every simple Monge matrix is the horizontal reflection of some plane partition.

    It remains to show that $\psi(\pi) \in \M^0_n$ if and only if the southern faces of $\pi$ are all weakly decreasing.
    Let $\pi_{ij}$ denote the entry in the $j$th box of the $i$th row of $\pi$ (in its standard depiction as a two-dimensional tableau), where $\pi_{ij} = 0$ if $\pi$ does not contain a box in position $(i,j)$.
    The columns in the southern face of the $i$th wall in $\pi$ are simply the differences $\pi_{ij}-\pi_{i+1,j}$, for each $1 \leq j \leq n$.
    Therefore, the southern faces of $\pi$ are all weakly decreasing if and only if, for all $1 \leq i,j \leq n$, we have
    \[
    \pi_{ij} - \pi_{i+1,j} \geq \pi_{i,j+1} - \pi_{i+1, j+1}.
    \]
    Due to horizontal reflection, this condition is equivalent to the following condition on the matrix $A \coloneqq \psi(\pi)$:
    \[
    A_{i,j+1} - A_{i+1,j+1} \geq A_{ij} - A_{i+1,j},
    \]
    which in turn is equivalent to
    \[
    A_{ij} + A_{i+1, j+1} \leq A_{i,j+1} + A_{i+1, j},
    \]
    which by~\eqref{Monge condition contiguous} is equivalent to $A$ having the Monge property.
\end{proof}

\begin{example}
    Let $n \geq 4$ and $k \geq 8$.
    Then the plane partition~\eqref{PP pictures} is an element of $\mathscr{P}(n,n,k)$, and we have already observed above that it is weakly decreasing.
    Moreover, its image under $\psi$ is the matrix with the upper-right $4 \times 4$ block
    \[
    \begin{bmatrix}
        1 & 2 & 5 & 8\\
        0 & 1 & 3 & 5\\
        0 & 0 & 2 & 3\\
        0 & 0 & 1 & 1
    \end{bmatrix}
    \]
    (and zeros elsewhere).
    Note that this matrix has the Monge property, and thus lies in $\mathscr{F}^k_{\max}(n)$.
\end{example}

We now turn from the max norm cardinalities to the $L_{1,1}$ norm cardinalities, which are somewhat more interesting,
especially if we consider simple Monge matrices of arbitrary dimension (by letting $n \rightarrow \infty$).
In fact, it turns out that these $L_{1,1}$ norm cardinalities arise in several interesting combinatorial settings, seemingly far removed from Monge matrices.

Given two matrices $A$ and $B$ with possibly different dimensions, consider the equivalence relation whereby $A \sim B$ if $A = \left[\begin{smallmatrix} 0 & B\\ 0 & 0 \end{smallmatrix} \right]$ or $B = \left[\begin{smallmatrix} 0 & A\\ 0 & 0 \end{smallmatrix} \right]$.
Then we can define the set of all simple Monge matrices of arbitrary dimension:
\[
    \M^0_\infty \coloneqq \left(\bigcup_{n=0}^\infty \M^0_n \right) \Bigg/ \sim.
\]
If $A \in \M^0_n$ for some $n$, then let $\overline{A} \in \M^0_\infty$ denote the equivalence class of $A$.
Note that $\| \overline{A} \|_{1,1} = \|A\|_{1,1}$ is well-defined, since every matrix in the equivalence class of $A$ has the same nonzero entries as $A$.
Therefore, it makes sense to define the set
    \begin{equation}
        \label{M0 infinity}
        \mathscr{G}^k_{1,1}(\infty) \coloneqq \Big\{ \overline{A} \in \M^0_\infty : \| \overline{A} \|_{1,1} = k \Big\}.
    \end{equation}
    Note that for fixed $k \in \N$, the set $\mathscr{G}^k_{1,1}(n)$ eventually stabilizes (modulo $\sim$) exactly when $n \geq k$:
    \[
        \mathscr{G}^k_{1,1}(k-1) \subsetneq \mathscr{G}^k_{1,1}(k) = \mathscr{G}^k_{1,1}(k+\ell), \qquad \text{for all }\ell \in \N.
    \]
    This is because the matrix $A$ (or its antitranspose) consisting of a $1 \times k$ block of 1's in the upper-right, with zeros elsewhere, is such that $\|A\|_{1,1} = k$, and $A$ occurs in $\M^0_{n}$ if and only if $n \geq k$.
    The upshot is that for each $k \in \N$, the set
    \begin{equation}
    \label{stable}
        \mathscr{G}^k_{1,1}(\infty) = \Big\{ \overline{A} : A \in \mathscr{G}^k_{1,1}(k) \Big\}
    \end{equation}
    is indeed finite.
    Moreover, the generating function of the $L_{1,1}$ growth series for $\M^0_\infty$ is obtained by letting $n \rightarrow \infty$ in Theorem~\ref{thm:growth series}(2):
    \begin{align}
        \label{gf}
        \begin{split}
        \sum_{k=0}^\infty \Big| \mathscr{G}^k_{1,1}(\infty) \Big| q^k &= \prod_{i,j=1}^\infty \frac{1}{1-q^{ij}}\\
        &= 1 + q + 3 q^2 + 5 q^3 + 11 q^4 + 17 q^5 + 34 q^6 + 52 q^7 + 94 q^8 + \cdots .
        \end{split}
    \end{align}
    This sequence $(|\mathscr{G}^k_{1,1}(\infty)|)_{k=0}^\infty= (1,1,3,5,11,17,34,52,94,145,244,370,603,\ldots)$ arises in several surprising combinatorial settings.
    We record these alternative interpretations in the following proposition, along with explicit bijective proofs.
    Immediately following the proposition, we give Example~\ref{ex:bijections} in order to illustrate these bijections.

    \begin{prop}
        \label{prop:interpretations}
        Let $\mathscr{G}^k_{1,1}(\infty)$ be as defined in~\eqref{M0 infinity}.
        The number of elements in $\mathscr{G}^k_{1,1}(\infty)$ also equals
        \begin{enumerate}
        \item the number of  partitions of $k$, when there are $d(a)$ different copies of each part $a = 1, 2, \ldots$, where $d(a)$ is the number of divisors of $a$.
        
        \item the number of factorization patterns of $k$, in the sense of Hultquist--Mullen--Niederreiter~\cite{Hultquist}.

        \item the number of partitions of $k$ where there are unlimited distinguishable but unlabeled parts of each size.

        \item the number of unital *-subalgebras of the $k \times k$ complex matrices, up to unitary similarity. 
        (A unital *-subalgebra is a subspace that is closed under multiplication and conjugate transpose, and which contains the identity matrix.)


    \end{enumerate}
        
    \end{prop}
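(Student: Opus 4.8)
The common feature of all four quantities, and of $|\mathscr{G}^k_{1,1}(\infty)|$ itself, is that each counts finite multisets of \emph{pairs} of positive integers with a prescribed weighted sum. The plan is therefore to fix a single ``master object'' and realize each of the five counts as an explicit bijection with it. For $k \in \N$, let $\Pi_k$ denote the set of finite multisets $M$ of pairs $(i,j)$ with $i,j \in \mathbb{Z}_{\geq 1}$ such that $\sum_{(i,j) \in M} ij = k$. The first step is to establish the bijection $\mathscr{G}^k_{1,1}(\infty) \longleftrightarrow \Pi_k$: by~\eqref{stable} together with the bijection $\Phi$ of~\eqref{dist density bijection}, every class $\overline{A} \in \mathscr{G}^k_{1,1}(\infty)$ equals $\overline{\Phi(X)}$ for a unique $X \in \N^{n \times n}$ with $n \geq k$ (distinct simple Monge matrices of the same size are never $\sim$-equivalent, since $\sim$ strictly changes the size); and the computation carried out in the proof of Theorem~\ref{thm:growth series}(2), via Lemma~\ref{lemma:Phi}, gives $\|\Phi(X)\|_{1,1} = \sum_{i,j} ij\, X_{i,-j}$. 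Hence sending $\overline{A}$ to the multiset in which the pair $(i,j)$ occurs $X_{i,-j}$ times is a bijection onto $\Pi_k$, and~\eqref{gf} is simply the statement $\sum_k |\Pi_k|\, q^k = \prod_{i,j\geq 1}(1-q^{ij})^{-1}$.

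The remaining work is to produce four bijections out of $\Pi_k$, each amounting to a different way of reorganizing a multiset of pairs (equivalently, of regrouping the product $\prod_{i,j}(1-q^{ij})^{-1}$). For item (1), I would group the pairs of a multiset $M$ according to their product $a = ij$; since the pairs with product $a$ are exactly the $d(a)$ ways of writing $a = ij$, recording the multiplicity with which each occurs turns $M$ into a partition of $k$ in which a part of size $a$ carries one of $d(a)$ colors (on the level of series this is the identity $\prod_{i,j\geq 1}(1-q^{ij})^{-1} = \prod_{a\geq 1}(1-q^a)^{-d(a)}$, the Euler transform of the divisor function). For items (2) and (3), I would instead group the pairs of $M$ by their first coordinate: for each $i$, the pairs with first coordinate $i$ contribute a partition $\mu^{(i)}$ whose parts are their second coordinates, and the weight becomes $\sum_i i\,|\mu^{(i)}|$. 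Reading $i$ as the degree of an irreducible factor and $\mu^{(i)}$ as the partition recording the multiplicities of the degree-$i$ irreducible factors recovers the factorization patterns of $k$ in the sense of~\cite{Hultquist} (item (2)); reading $i$ as a part size and the parts of $\mu^{(i)}$ as the numbers of size-$i$ parts belonging to each unlabeled ``type'' (so that a type with $j$ parts contributes $ij$) recovers item (3). Finally, for item (4) I would invoke the structure theory of finite-dimensional $C^*$-algebras: a unital $*$-subalgebra $A \subseteq M_k(\mathbb{C})$ is, as a $*$-algebra, a direct sum $\bigoplus_t M_{n_t}(\mathbb{C})$, and the way it sits inside $M_k(\mathbb{C})$ is classified up to unitary similarity by the multiset of pairs $(n_t, m_t)$, where $m_t \geq 1$ is the multiplicity with which the $t$-th block acts on $\mathbb{C}^k$; unitality is precisely the condition $\sum_t n_t m_t = k$, so these subalgebras are in bijection with $\Pi_k$ via $\{(n_t,m_t)\}$.

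I expect the bulk of the difficulty to lie in item (4): one must state carefully, and justify via Artin--Wedderburn and the classification of $*$-representations of finite-dimensional $C^*$-algebras, that two unital $*$-subalgebras of $M_k(\mathbb{C})$ are unitarily conjugate if and only if their ``inclusion data'' $\{(n_t,m_t)\}$ coincide, and that unitality forces $\sum_t n_t m_t = k$ rather than merely $\leq k$. The only other point requiring care is bookkeeping for item (2): one must recall the precise definition of a factorization pattern from~\cite{Hultquist} and check that it unwinds to the ``$(i,\mu^{(i)})$'' description above. Everything else --- the bijection $\mathscr{G}^k_{1,1}(\infty) \longleftrightarrow \Pi_k$ and items (1) and (3) --- consists of elementary regroupings of multisets, which should be routine to write out, with the accompanying Example~\ref{ex:bijections} making the four maps concrete.
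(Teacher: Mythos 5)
Your proposal is correct, and its skeleton is the same as the paper's: the paper's master object is the set $\mathscr{D}_k$ of density matrices with $\sum_{i,j} ij\cdot M_{i,-j}=k$, which is exactly your $\Pi_k$ (a multiset of pairs $(i,j)$ with $\sum ij=k$ automatically has all entries at most $k$, so it fits in a $k\times k$ matrix), and the reduction $\mathscr{G}^k_{1,1}(\infty)\leftrightarrow\mathscr{D}_k$ is carried out just as you describe, via Lemma~\ref{lemma:Phi} and~\eqref{stable}. Your bijection for (1) (group pairs by the product $a=ij$, with the $d(a)$ factorizations of $a$ playing the role of the $d(a)$ copies of the part $a$) is the paper's map verbatim, encoded there with the divisor-counting function $d_i(a)$. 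For (3) your map differs from the paper's only by partition conjugation: you take the parts of $\mu^{(i)}$ themselves as the type-counts, while the paper assigns to the label $(j)$ the count $\sum_{\ell\geq j} M_{i,-\ell}$; both are bijections. The genuine divergence is in (2) and (4), where the paper does not construct bijections from scratch: for (2) it composes the bijection of (1) with the explicit bijection between divisor-colored partitions and factorization patterns given in~\cite{AM}, and for (4) it reduces to the parametrization by $\mathscr{R}_k$ of part (3), citing the OEIS exposition for the structure theory. You instead build direct bijections out of $\Pi_k$ --- reading $(i,\mu^{(i)})$ as degrees and multiplicity-partitions of irreducible factors for (2), and the inclusion data $\{(n_t,m_t)\}$ with $\sum_t n_t m_t = k$ for (4). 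Both of your direct routes are sound; what they cost you is exactly what you flag: for (2) you must verify that the Hultquist--Mullen--Niederreiter definition unwinds to your description (the paper sidesteps this by citation), and for (4) you must prove, via the classification of finite-dimensional $*$-representations, that unitary conjugacy of unital $*$-subalgebras is equivalent to equality of the multisets $\{(n_t,m_t)\}$ and that unitality forces $\sum_t n_t m_t = k$ exactly --- a cleaner and arguably more self-contained statement than the paper's detour through $\mathscr{R}_k$, but one you would need to write out in full rather than cite.
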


    \begin{proof}

    Let $\overline{A} \in \M^0_\infty$.
    By~\eqref{dist density bijection} and Lemma~\ref{lemma:Phi}, we have
    \[
    \| \overline{A} \|_{1,1} = \| A \|_{1,1} = \sum_{i,j} ij \cdot \Delta(A)_{i, -j},
    \]
    where again we write $-j$ to denote the $j$th column counting from the right.
    Hence $\|\overline{A}\|_{1,1}$ is a weighted sum of the (finitely many) nonzero entries of the density matrix $\Delta(A)$.
    Therefore, by the bijections in~\eqref{dist density bijection}, and by~\eqref{stable}, we have a bijection
    \begin{equation}
        \label{count}
        \mathscr{D}_k \coloneqq \left\{M \in \N^{k \times k} : \sum_{i,j=1}^k ij \cdot M_{i,-j} = k \right\} \xlongrightarrow{\overline \Sigma} \mathscr{G}^k_{1,1}(\infty),
    \end{equation}
    where $\overline{\Sigma}$ is defined exactly as  $\Sigma$ is defined in~\eqref{A Sigma}, but with the dimension $n+1$ replaced by $k$, and its image sent to its equivalence class modulo $\sim$.
    Hence in this proof it will suffice to construct bijections between the objects in (1)--(4) and the set~$\mathscr{D}_k$ of density matrices on the left-hand side of~\eqref{count}.

    \begin{enumerate}
        \item Let $\mathscr{P}_k$ denote the set of partitions $\pi \vdash k$, where the parts can be chosen from the set
        \[
        \bigcup_{a=1}^\infty \left\{a_{[1]}, \ldots, a_{[d(a)]} \right\},
        \]
        with the bracketed subscripts designating the $d(a)$ different copies of each part $a$.
        Let $d_i(a)$ denote the number of divisors of $a$ which are $\leq i$.
        For example, we have $d_4(20) = 3$ because there are three divisors of $20$ which are $\leq 4$ (namely $1$, $2$, and $4$).
        Clearly $d_i(a) \leq d(a)$ for all $i$.

        We now exhibit a bijection $\mathscr{P}_k \longleftrightarrow \mathscr{D}_k$.
        To each partition $\pi \in \mathscr{P}_k$, we associate the matrix $M$ in which each entry $M_{i,-j}$ equals the number of occurrences of the part $(ij)_{[d_i(ij)]}$ in $\pi$.
        This map is one-to-one, and the fact that $\pi \vdash k$ guarantees that $M \in \mathscr{D}_k$.
        In the opposite direction, to each matrix $M \in \mathscr{D}_k$, we associate the partition $\pi$ which contains exactly $M_{i,-j}$ many occurrences of the part $(ij)_{[d_i(ij)]}$, for all $1 \leq i,j \leq k$.
        Again, this map is one-to-one, and the defining condition on $\mathscr{D}_k$ in~\eqref{count} guarantees that $\pi \in \mathscr{P}_k$.
        Therefore we have a bijection $\mathscr{P}_k \longleftrightarrow \mathscr{D}_k$.

        \item We refer the reader to~\cite{AM}*{pp.~121--2}, where a \emph{factorization pattern} is defined (first introduced in~\cite{Hultquist}), and where an explicit bijection is given between $\mathscr{P}_k$ and the set of factorization patterns of $k$.
        Composing this with the bijection in part (1) then produces the desired bijection with $\mathscr{D}_k$.

        \item Let $\mathscr{R}_k$ denote the set of partitions $\rho \vdash k$, where there are unlimited distinguishable but unlabeled parts of each size.
        This means that for each part $a$ occurring in $\rho$, we decorate the occurrences of $a$ with subscripts $(1), (2), \ldots$, such that the number of $a_{(j)}$'s is greater than or equal to the number of $a_{(j+1)}$'s for all $j$ (this is the ``unlabeled'' condition).

        We now exhibit a bijection $\mathscr{R}_k \longleftrightarrow \mathscr{D}_k$.
        To each $\rho \in \mathscr{R}_k$, we associate the matrix $M$, with entries defined via
        \[
        M_{i,-j} = \Big(\text{number of $i_{(j)}$'s in $\rho$}\Big) - \Big(\text{number of $i_{(j+1)}$'s in $\rho$}\Big).
        \]
        This map is one-to-one, and the fact that $\rho \vdash k$ guarantees that $M \in \mathscr{D}_k$.
        In the opposite direction, to a matrix $M \in \mathscr{D}_k$, we associate the partition $\rho$ which contains exactly $\sum_{\ell \geq j} M_{i,-\ell}$ many occurrences of the part $i_{(j)}$.
        This map is also one-to-one, and the fact that $M \in \mathscr{D}_k$ guarantees that $\rho \vdash k$.
        Hence we have a bijection $\mathscr{R}_k \longleftrightarrow \mathscr{D}_k$.
    
        \item We refer the reader to the exposition written by Nathaniel Johnston for OEIS entry {\href{https://oeis.org/A215905}{A215905}}.
        The key fact, for our purposes, is that every *-subalgebra of the $k \times k$ complex matrices ${\rm M}_k \coloneqq {\rm M}_k(\mathbb{C})$ is unitarily similar to a direct sum
        of full matrix algebras of smaller dimension, where summands of the same dimension may or may not be forced to be identical (and if they are so forced, then we obtain the diagonal subalgebra of their direct sum).
        The unital condition forces the sums of the squares of the dimensions of the summands to equal $k^2$.
        Hence, the set of equivalence classes of unital *-subalgebras of ${\rm M}_k$ is parametrized by the set $\mathscr{R}_k$ from part (3) above, namely, the set of partitions of $k$ which allow for distinguishable but unlabeled parts.
        For example, if $k=20$, then the partition
        \[
        \rho = \left(6_{(1)}, 3_{(1)}, 2_{(1)}, 2_{(1)}, 2_{(2)}, 1_{(1)},1_{(1)}, 1_{(1)}, 1_{(2)},1_{(3)} \right)
        \]
        corresponds to the equivalence class of the unital *-subalgebra
        \[
        {\rm M}_6 \oplus {\rm M}_3 \oplus \underbrace{\Delta({\rm M}_2 \oplus {\rm M}_2)}_{\cong {\rm M}_2} \oplus \: {\rm M}_2 \oplus \underbrace{\Delta({\rm M}_1 \oplus {\rm M}_1 \oplus {\rm M}_1)}_{\cong {\rm M}_1} \oplus \: {\rm M}_1 \oplus {\rm M}_1,
        \]
        where $\Delta$ denotes the diagonal subalgebra consisting of tuples of identical elements. \qedhere   
    \end{enumerate}

    \end{proof}

    \begin{example}
        \label{ex:bijections}
        We give an example illustrating the bijections in the proofs of Proposition~\ref{prop:interpretations}.
        Since parts (2) and (4) are straightforward modifications of (1) and (3), respectively, it suffices to give an example of the following bijections:
        
        \begin{center}
    \begin{tikzpicture}
    \node (D) at (0,0) {$\mathscr{D}_k$};
    \node (P) at (-1.5,.5) {$\mathscr{P}_k$};
    \node (R) at (-1.5,-.5) {$\mathscr{R}_k$};
    \node (M) at (1.9,0) {$\mathscr{G}^k_{1,1}(\infty)$};
    
    \draw [<->] (P) -- (D) node [midway,above=1pt,scale=.65] {(1)};
    \draw [<->] (R) -- (D) node [midway,below=1pt,scale=.65] {(3)};
    \draw [->] (D) -- (M) node [midway,above=1pt,scale=.65] {$\overline{\Sigma}$};
    
\end{tikzpicture}
\end{center}

        \noindent For our example, we take $k=77$, and we start with the density matrix $M \in \mathscr{D}_{77}$ whose entries are all zero except for its $4 \times 4$ upper-right block
        \[
        \begin{bmatrix}
            2 & 0 & 0 & 1\\
            0 & 1 & 1 & 2\\
            1 & 1 & 3 & 1\\
            0 & 0 & 0 & 3
        \end{bmatrix}.
        \]
        One can see that indeed $M \in \mathscr{D}_{77}$ because the $L_{1,1}$ norm of its image $\overline{\Sigma}(M)$ is $77$:
        \[
        \overline{\Sigma}(M) = \overline{\begin{bmatrix}
            3 & 5 & 9 & 16\\
            1 & 3 & 7 & 13\\
            1 & 2 & 5 & 9\\
            0 & 0 & 0 & 3
        \end{bmatrix}} \in \mathscr{G}_{1,1}^{77}(\infty).
        \]
        Using the bijective proof of part (1) of Proposition~\ref{prop:interpretations}, we have the partition $\pi \in \mathscr{P}_{77}$ given by
        \[
        \pi = \Big(12_{[3]}, 9_{[2]}, 6_{[2]}, 6_{[3]}^3, 4_{[1]}^2, 4_{[2]}, 4_{[3]}^3, 3_{[2]}, 2_{[2]}^2, 1_{[1]} \Big),
        \]
        where exponents denote multiple occurrences of a part.
        For example, $\pi$ contains three occurrences of $4_{[3]}$ because of the entry $M_{4,-1} = 3$: in particular, the product $4 \cdot 1 = 4$ determines the part $4$, the fact $d_4(4)=3$ determines the copy given by the subscript $[3]$, and the entry $3$ gives the number of occurrences of $4_{[3]}$ in $\pi$.
        Likewise, using the bijective proof of part (3) of Proposition~\ref{prop:interpretations}, we have the partition $\rho \in \mathscr{R}_{77}$ given by
        \[
        \rho = \Big( 4_{(1)}^3, 3_{(1)}^6, 3_{(2)}^5, 3_{(3)}^2, 3_{(4)}, 2_{(1)}^4, 2_{(2)}^2, 2_{(3)}, 1_{(1)}^3, 1_{(2)}^2, 1_{(3)}^2, 1_{(4)}^2 \Big).
        \]
        For example, $\rho$ contains five occurrences of $3_{(2)}$ because the sum of the entries in the $3$rd row of $M$, lying weakly left of the $2$nd column from the right, is five.
    \end{example}

    \begin{remark}
        All four interpretations in Proposition~\ref{prop:interpretations} are listed in OEIS entry~{\href{https://oeis.org/A006171}{A006171}}.
        Interpretation (1), along with several variants, was studied by Agarwal--Mullen~\cite{AM}, who used an equivalent form
        \[
        \prod_{a=1}^\infty \frac{1}{(1-q^a)^{d(a)}}
        \]
        of our generating function~\eqref{gf} to study its asymptotic behavior.
        In interpretation (4), if one relaxes the unital condition (see OEIS entry {\href{https://oeis.org/A215925}{A215925}}), then the number of *-subalgebras of ${\rm M}_k$ equals the cardinality of $\mathscr{F}^k_{1,1}(\infty) \coloneqq \{ \overline{A} \in \M^0_\infty : \| \overline{A} \|_{1,1} \leq k \}$.
    \end{remark}

\section{Main result\texorpdfstring{: $(\N^{n \times n}, \star) \cong (\M^0_n, \odot)$}{}}
\label{sec:main}

Our main result unifies Sections~\ref{sec:Demazure} and~\ref{sec:Monge}, by stating that our extension of the Demazure product is essentially nothing other than the  distance product of simple Monge matrices.
In other words, the procedure of stacking and fusing two kelp beds (Definition~\ref{def:Demazure extended}) actually computes the distance product of the two corresponding simple Monge matrices.
We state the main theorem below along with the running example from earlier in the paper; we defer the proof of the theorem to the end of the section, after we have proved two key lemmas.

\begin{theorem}
    \label{thm:main result}
    The map $\Phi : \N^{n \times n} \longrightarrow \M^0_n$ defined in~\eqref{dist density bijection} is in fact an isomorphism of semigroups $(\N^{n \times n}, \star) \cong (\M^0_n, \odot)$.
\end{theorem}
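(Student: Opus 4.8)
The plan is to reduce the statement to a single combinatorial identity about up--down pairs, and then establish that identity. Since the diagram~\eqref{dist density bijection} already exhibits $\Phi$ as a bijection $\N^{n \times n}\to\M^0_n$, the only real content is the homomorphism property $\Phi(X\star Y)=\Phi(X)\odot\Phi(Y)$ for all $X,Y\in\N^{n\times n}$ (from which, incidentally, associativity of $\star$ on $\N^{n\times n}$ falls out for free, transported from the known associativity of $\odot$). First I would rewrite $\Phi$ in kelp-bed language: Lemma~\ref{lemma:Phi} gives, for $1\le i,j\le n+1$,
\[
\Phi(Z)_{ij}=\#\bigl\{\,(a,b)\in Z : a\ge i \text{ and } b\le j-1\,\bigr\},
\]
that is, $\Phi(Z)_{ij}$ counts the kelps of $Z$ whose top vertex lies weakly right of position $i$ and whose bottom vertex lies weakly left of position $j-1$. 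Substituting this into the definition~\eqref{min plus matrix product} of $\odot$ and reindexing the inner minimum by $q=k-1$, the homomorphism property becomes the assertion that for all $0\le p,r\le n$,
\[
\#\bigl\{(a,b)\in X\star Y : a>p,\ b\le r\bigr\}=\min_{0\le q\le n}\Bigl(\#\bigl\{(a,b)\in X : a>p,\ b\le q\bigr\}+\#\bigl\{(c,d)\in Y : c>q,\ d\le r\bigr\}\Bigr).
\]

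Call this identity $(\ast)$. Both sides of $(\ast)$ I would read in terms of \emph{windowed} systems of up--down pairs for $(X,Y)$: systems $\{(x_i,y_i)\}$ as in Definition~\ref{def:tangledness} subject to the extra constraints that the top vertex of $x_i$ is $>p$ and the bottom vertex of $y_i$ is $\le r$ (equivalently, ordinary systems for the pair obtained from $(X,Y)$ by discarding the kelps of $X$ with top $\le p$ and those of $Y$ with bottom $>r$). The right-hand side of $(\ast)$ equals the maximum size of such a windowed system; this is the first key lemma, and it rests on the ``staircase'' structure of the up--down relation. For ``$\le$'': for \emph{any} threshold $q$, every up--down pair $\bigl((a,b),(c,d)\bigr)$ satisfies $b\le q$ or $c>q$ (since $b\le c$), hence meets one of the two sets counted on the right of $(\ast)$; since a system is a set of vertex-disjoint pairs in the bipartite multigraph on $X\sqcup Y$ whose edges are the up--down pairs, any windowed system therefore has size at most that of the corresponding threshold set. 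For ``$\ge$'', K\"onig's theorem (equivalently max-flow/min-cut) identifies the maximum windowed system with a minimum vertex cover, and a short argument shows that every vertex cover of this staircase bipartite multigraph has size at least $\min_q(\cdots)$: if an uncovered $X$-kelp and an uncovered $Y$-kelp never form an edge, one reads off the appropriate $q$ as the largest top position among the uncovered $Y$-kelps. Finally, the left-hand side of $(\ast)$ is itself $\le$ this windowed maximum: $X\star Y$ is the fusion of the pairs $(x_1,y_1),\dots,(x_w,y_w)$ produced by Definition~\ref{def:Demazure extended}, each fused kelp has top vertex $=$ top vertex of $x_\ell$ and bottom vertex $=$ bottom vertex of $y_\ell$, and (part of the second lemma) the $(x_\ell,y_\ell)$ are genuine up--down pairs occupying pairwise disjoint slots, so the ones landing in the window $(p,r)$ constitute a windowed system.

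It therefore remains to prove the reverse inequality in $(\ast)$, namely that the number of fused kelps landing in the window $(p,r)$ is at least the windowed maximum — so in fact equals it for all $p$ and $r$. This is the second key lemma, and I expect it to be the main obstacle: it says that the greedy loop of Definition~\ref{def:Demazure extended} produces a system of up--down pairs that is not merely of maximum size but \emph{uniformly optimal}, attaining the windowed maximum simultaneously for every window. I would prove by induction on $\ell$ that $\{(x_1,y_1),\dots,(x_\ell,y_\ell)\}$ is such a uniformly optimal system for the sub-bed pair $(X_\ell,Y_\ell)$. The inductive step is an exchange argument that exploits the prescribed tie-breaking in steps~(a)--(b) (rightmost admissible top for $x_\ell$, leftmost admissible bottom for $y_\ell$): given any windowed system beating the greedy one, the staircase structure of the up--down relation allows one to reroute that system through $x_\ell$ and $y_\ell$ and push the surplus back to an earlier stage, contradicting the inductive hypothesis there. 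Combining this with the first lemma yields $(\ast)$, hence $\Phi(X\star Y)=\Phi(X)\odot\Phi(Y)$, so $\Phi$ is an isomorphism of semigroups.

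Finally, since $\mathfrak S_n$ sits inside $\N^{n\times n}$ as the permutation matrices and $\Phi$ carries these exactly to Tiskin's simple unit-Monge matrices, restricting the isomorphism just proved to $\mathfrak S_n$ and invoking Tiskin~\cite{Tiskin}*{Thm.~4} (or Pflueger~\cite{Pflueger}*{eqn.~(2)}) shows that $\star$ restricts on $\mathfrak S_n$ to the classical Demazure product, which justifies the terminology ``extended Demazure product.''
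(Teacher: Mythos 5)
Your overall architecture coincides with the paper's: your identity $(\ast)$ is exactly the conjunction of the paper's Lemma~\ref{lemma:Sigma X star Y} and Lemma~\ref{lemma:tangled and min plus}; your ``first key lemma'' is Lemma~\ref{lemma:tangled and min plus} applied to the windowed sub-beds, and proving it via K\"onig's theorem (maximum matching $=$ minimum vertex cover in the staircase bipartite graph) instead of the paper's explicit optimal threshold $k=c'+1$ is a perfectly sound variant; and your ``second key lemma'' is precisely the paper's equation~\eqref{number equals weight}. The gap is that this second lemma --- which, as you rightly say, carries essentially all of the difficulty --- is never proved in your proposal, and the one concrete structural claim you do assert in its support is false.

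Specifically, you claim that the pairs $(x_\ell,y_\ell)$ produced by Definition~\ref{def:Demazure extended} are genuine up--down pairs, so that the fused kelps landing in a window form a windowed system. Take $n=5$, write kelps as (top, bottom), and let $X$ consist of $(5,1)$ and $(4,3)$, $Y$ of $(4,1)$ and $(2,2)$. The algorithm gives $x_1=(5,1)$ and $y_1=(4,1)$ (leftmost bottom among weight-increasing choices); at $\ell=2$ it must take $x_2=(4,3)$, and the only remaining kelp $y_2=(2,2)$ does increase the weight, via the system $\{((5,1),(2,2)),\,((4,3),(4,1))\}$. So $(4,3)$ is fused with $(2,2)$ even though $3>2$: the greedy pairs need not be up--down pairs, and the fused kelps in a window need not themselves constitute a windowed system. (The counting identity still holds in this example, but only after re-matching; a similar example, $X=\{(5,4),(4,1)\}$ and $Y=\{(2,1),(4,2)\}$, shows the bottoms of the $y_\ell$ are not even weakly increasing, so naive monotonicity arguments are unavailable.) Consequently even your ``easy'' inequality, that the number of fused kelps in the window is at most the windowed maximum, needs an argument you have not supplied, and the crucial reverse inequality --- uniform optimality of the greedy construction over all windows simultaneously --- is only gestured at (``reroute the system through $x_\ell$ and $y_\ell$ and push the surplus back''), a plan that in particular cannot literally route through $(x_\ell,y_\ell)$ when that pair is not an edge of the up--down graph. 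Until this lemma is actually established, the identity $\Phi(X\star Y)=\Phi(X)\odot\Phi(Y)$, and hence the theorem, is not proved.
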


\begin{example}
    We return once more to our running example from both~\eqref{ex:example from intro} and Example~\ref{ex:example from intro}.
    Recall that we took $n=4$, and we had
    \[
    X = \begin{bmatrix}
        0 & 0 & 0 & 0\\
        1 & 0 & 1 & 0\\
        0 & 0 & 3 & 0\\
        0 & 1 & 0 & 0
    \end{bmatrix},
    \qquad
    Y = \begin{bmatrix}
        0 & 0 & 1 & 1\\
        0 & 0 & 0 & 1\\
        1 & 0 & 0 & 0\\
        0 & 0 & 0 & 0
    \end{bmatrix},
    \qquad
    X \star Y = \begin{bmatrix}
        0 & 0 & 0 & 0\\
        0 & 0 & 1 & 0\\
        0 & 0 & 0 & 1\\
        1 & 0 & 0 & 0
    \end{bmatrix}.
    \]
    Recall from~\eqref{dist density bijection} that $\Phi = \Sigma \circ {\rm L}$ first pads a matrix with 0's around the southwest border, then replaces each entry with the sum of the entries weakly southwest of it.
    (Stated this way, the two operations actually commute.)
    Hence the images of the three matrices shown above are the simple Monge matrices
    \[
    \Phi(X) = \begin{bmatrix}
        0 & 1 & 2 & 6 & 6 \\
        0 & 1 & 2 & 6 & 6 \\
        0 & 0 & 1 & 4 & 4 \\
        0 & 0 & 1 & 1 & 1 \\
        0 & 0 & 0 & 0 & 0        
    \end{bmatrix},
    \qquad
    \Phi(Y) = \begin{bmatrix}
         0 & 1 & 1 & 2 & 4 \\
         0 & 1 & 1 & 1 & 2 \\
         0 & 1 & 1 & 1 & 1 \\
         0 & 0 & 0 & 0 & 0 \\
         0 & 0 & 0 & 0 & 0 \\
    \end{bmatrix},
    \qquad
    \Phi(X \star Y) = \begin{bmatrix}
     0 & 1 & 1 & 2 & 3 \\
     0 & 1 & 1 & 2 & 3 \\
     0 & 1 & 1 & 1 & 2 \\
     0 & 1 & 1 & 1 & 1 \\
     0 & 0 & 0 & 0 & 0 \\
    \end{bmatrix}.
    \]
    To verify Theorem~\ref{thm:main result}, we use~\eqref{min plus matrix product} to take the distance product $\Phi(X) \odot \Phi(Y)$ directly, and we find that indeed
    \[
    \Phi(X) \odot \Phi(Y) = \begin{bmatrix}
        0 & 1 & 2 & 6 & 6 \\
        0 & 1 & 2 & 6 & 6 \\
        0 & 0 & 1 & 4 & 4 \\
        0 & 0 & 1 & 1 & 1 \\
        0 & 0 & 0 & 0 & 0        
    \end{bmatrix}
    \odot
    \begin{bmatrix}
         0 & 1 & 1 & 2 & 4 \\
         0 & 1 & 1 & 1 & 2 \\
         0 & 1 & 1 & 1 & 1 \\
         0 & 0 & 0 & 0 & 0 \\
         0 & 0 & 0 & 0 & 0 \\
    \end{bmatrix} = 
    \begin{bmatrix}
     0 & 1 & 1 & 2 & 3 \\
     0 & 1 & 1 & 2 & 3 \\
     0 & 1 & 1 & 1 & 2 \\
     0 & 1 & 1 & 1 & 1 \\
     0 & 0 & 0 & 0 & 0 \\
    \end{bmatrix}
    = \Phi(X \star Y).
    \]
\end{example}

En route to proving Theorem~\ref{thm:main result}, we first prove two lemmas below.
In their proofs, it will be convenient to restrict a kelp bed $X$ to the subgraph induced by a certain interval of top or bottom vertices.
Given $S,T \subseteq [n]$, we will adopt the standard notation for the subgraph of $X$ induced by $S \times T$, writing
\begin{equation}
    \label{subgraph notation}
    X[S \times T] \coloneqq \Big\{ \underbrace{(i,j), \ldots, (i,j)}_{X_{ij} \text{ copies}} : (i,j) \in S \times T \Big\}.
\end{equation}

\begin{lemma}
    \label{lemma:Sigma X star Y}
    Let $\Phi$ be the map defined in~\eqref{dist density bijection}, and let $\star$ denote the Demazure product from Definition~\ref{def:Demazure extended}.
    For all $1 \leq i,j \leq n+1$, we have
    \[
    \Phi(X \star Y)_{ij} = {\rm wt}\Big(X\big[\{i, \ldots, n\} \times [n]\big], \; Y \big[[n] \times \{1, \ldots, j-1\}\big]\Big)
    \]
    with the weight $({\rm wt})$ as defined in Definition~\ref{def:tangledness}.
\end{lemma}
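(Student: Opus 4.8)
The plan is to express both sides as combinatorial quantities attached to the run of the construction in Definition~\ref{def:Demazure extended}, and to compare them via a max-flow--min-cut argument for the weight.

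Write the run of the construction on $(X,Y)$ as producing pairs $(x_\ell,y_\ell)_{\ell=1}^{w}$, with $x_\ell=(a_\ell,b_\ell)\in X$ and $y_\ell=(c_\ell,d_\ell)\in Y$, so that $X\star Y$ is the kelp bed whose kelps are the fused pairs $(a_\ell,d_\ell)$. Since $\Phi=\Sigma\circ{\rm L}$ in~\eqref{dist density bijection}, the entry $\Phi(X\star Y)_{ij}$ is the number of kelps of $X\star Y$ lying weakly southwest of position $(i,j-1)$, i.e.\ (using~\eqref{subgraph notation}) the number of indices $\ell$ with $a_\ell\ge i$ and $d_\ell\le j-1$. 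Step~(a) of the construction forces $a_1\ge a_2\ge\cdots\ge a_w$, so the indices with $a_\ell\ge i$ form a prefix $\{1,\dots,m\}$; writing $P_{ij}\coloneqq\{\ell\le m:d_\ell\le j-1\}$, $X'\coloneqq X[\{i,\dots,n\}\times[n]]$, and $Y'\coloneqq Y[[n]\times\{1,\dots,j-1\}]$, the lemma thus reduces to the claim $\#P_{ij}={\rm wt}(X',Y')$.

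The key tool is the min-cut formula: for any $U,V\in\N^{n\times n}$,
\[
{\rm wt}(U,V)=\min_{0\le k\le n}\Big(\#\{u\in U:\beta(u)\le k\}+\#\{v\in V:\beta(v)>k\}\Big),
\]
where $\beta(\cdot)$ is the middle vertex of a kelp (the bottom vertex for a kelp of $U$, the top vertex for a kelp of $V$); here ``$\le$'' is immediate since every up--down pair $(u,v)$ has $\beta(u)\le k$ or $\beta(v)\ge\beta(u)>k$, and ``$\ge$'' is a standard max-flow--min-cut (equivalently greedy) argument for the bipartite graph of up--down pairs. With this in hand, the inequality $\#P_{ij}\le{\rm wt}(X',Y')$ comes out quickly: each step of the construction strictly increases the weight, so ${\rm wt}(X_m,Y_m)=m=\#X_m=\#Y_m$, hence $X_m$ and $Y_m$ admit a perfect matching, i.e.\ a permutation $\pi$ of $\{1,\dots,m\}$ with $\beta(x_\ell)\le\beta(y_{\pi(\ell)})$ for all $\ell$; discarding the $\ell$ with $d_{\pi(\ell)}>j-1$ leaves a system of up--down pairs for $(X',Y')$ (its $x$-parts have top $\ge i$ since $\ell\le m$, its $y$-parts have bottom $\le j-1$) of cardinality $\#P_{ij}$, because $\pi$ is a bijection of $\{1,\dots,m\}$.

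The reverse inequality ${\rm wt}(X',Y')\le\#P_{ij}$ is the crux; it is equivalent to the statement that the size-$\#P_{ij}$ system just produced is \emph{maximum} for $(X',Y')$, so by the min-cut formula it suffices to exhibit one threshold $k^\ast$ with $\#\{(a,b)\in X:a\ge i,\ b\le k^\ast\}+\#\{(c,d)\in Y:c>k^\ast,\ d\le j-1\}=\#P_{ij}$. Since $\pi$ matches $x$'s to $y$'s in weakly increasing order of $\beta$, the surviving matched pairs have weakly increasing $\beta$-coordinates on both sides, so there is a $k^\ast$ lying in a gap of this monotone matching (straddled by none of the surviving pairs); the real content is that such a $k^\ast$ can be taken to be \emph{clean}, meaning that no kelp of $X$ with top $\ge i$ and middle $\le k^\ast$, and no kelp of $Y$ with bottom $\le j-1$ and middle $>k^\ast$, is either skipped by the construction or matched out of the window. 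This is exactly the assertion that the construction is not merely globally optimal ($w={\rm wt}(X,Y)$, the case $i=1$, $j=n+1$ of the lemma) but \emph{window-optimal}, and it is here that the tie-breaking rules of Definition~\ref{def:Demazure extended} (rightmost $x_\ell$, leftmost $y_\ell$) do their work. I expect to establish window-optimality by induction on $\#X+\#Y$: peel off the first pair $(x_1,y_1)$, check that the run on $(X\setminus\{x_1\},Y\setminus\{y_1\})$ reproduces the tail of the run on $(X,Y)$, and track the controlled change in each $P_{ij}$ and each window weight, the base case $w=0$ being immediate. Carrying out the bookkeeping of the skipped kelps through this induction --- the reason the tie-breaking conventions are stated as they are --- is the main obstacle.
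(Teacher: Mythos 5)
Your reduction is sound as far as it goes: translating $\Phi(X\star Y)_{ij}$ into the count $\#P_{ij}$ of fused kelps $(a_\ell,d_\ell)$ with $a_\ell\ge i$, $d_\ell\le j-1$, using the weak monotonicity $a_1\ge\cdots\ge a_w$ from step (a), matches the paper's use of~\eqref{fact Phi number}; your min-cut formula is exactly the paper's Lemma~\ref{lemma:tangled and min plus} (proved there, so invoking it is legitimate even though it appears after this lemma); and your argument for $\#P_{ij}\le{\rm wt}(X',Y')$ via the perfect matching guaranteed by ${\rm wt}(X_w,Y_w)=w=\#X_w=\#Y_w$ is correct. But the proposal does not prove the lemma, because the reverse inequality ${\rm wt}(X',Y')\le\#P_{ij}$ --- which you yourself identify as the crux --- is only announced, not established. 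Exhibiting a ``clean'' threshold $k^\ast$ (one not straddled by any surviving matched pair \emph{and} not charged by any kelp of $X'$ or $Y'$ outside the surviving system) is precisely the window-optimality statement that carries the content of the lemma; your plan to obtain it by peeling off $(x_1,y_1)$ and inducting on $\#X+\#Y$, with the admission that ``carrying out the bookkeeping of the skipped kelps \ldots is the main obstacle,'' leaves that step as a conjecture. In particular, you have not yet used the tie-breaking rules of Definition~\ref{def:Demazure extended} anywhere in an actual argument, and without them the claimed identity can fail, so this is a genuine gap rather than a routine omission.

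For comparison, the paper's proof takes the same decomposition but asserts the window statement directly as~\eqref{number equals weight}: because the $x_\ell$ are chosen right-to-left in their top vertices and the $y_\ell$ left-to-right in their bottom vertices, the restriction of the fused pairs to the window $\{i,\ldots,n\}\times\{1,\ldots,j\}$ already realizes the weight of the windowed pair $\big(X[\{i,\ldots,n\}\times[n]],\,Y[[n]\times\{1,\ldots,j\}]\big)$, and then~\eqref{fact Phi number} finishes. So the missing half of your argument is exactly the assertion the paper extracts from the greedy ordering; to complete your route you would need to carry out the induction (or give a direct exchange/cut argument showing no skipped kelp with top vertex $\ge i$ and middle vertex $\le k^\ast$, and no unmatched kelp of $Y'$ with middle vertex $>k^\ast$, can exist without contradicting the rightmost/leftmost choices), which is the substantive work still to be done.
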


\begin{proof}
    From step (2) of Definition~\ref{def:Demazure extended}, it is clear that for each $1 \leq \ell \leq w$, we have ${\rm wt}(X_\ell,Y_\ell) = \ell$, since ${\rm wt}(X_0, Y_0) = 0$ and the weight increases by exactly 1 every time we add a pair $(x_\ell, y_\ell)$.
    Moreover, since we successively choose the kelps $x_\ell$ from right to left (with respect to their top vertex), and the corresponding kelps $y_\ell$ from left to right (with respect to their bottom vertex), we have 
    \begin{equation}
        \label{number equals weight}
        \# (X \star Y)[\{i, \ldots, n\} \times \{1, \ldots, j\}] = {\rm wt}\Big(X\big[\{i, \ldots, n\} \times [n]\big], \: Y\big[[n] \times \{1, \ldots, j\}\big] \Big).
    \end{equation}
    Meanwhile, rewriting the definitions~\eqref{A Sigma},~\eqref{L}, and~\eqref{dist density bijection} in terms of our new induced subgraph notation, we have the general fact
    \begin{equation}
        \label{fact Phi number}
        \Phi(X)_{ij} = \#X[\{i, \ldots, n\} \times \{1, \ldots, j-1\}].
    \end{equation}
    Combining this fact with~\eqref{number equals weight}, we obtain
    \begin{align*}
        \Phi(X \star Y)_{ij} &= \#(X \star Y)[\{i, \ldots, n\} \times \{1, \ldots, j-1\}] \\
        &= {\rm wt}\Big(X\big[\{i, \ldots, n\} \times [n]\big], \: Y\big[[n] \times \{1, \ldots, j-1\}\big] \Big). \qedhere
    \end{align*}
\end{proof}

\begin{lemma}
    \label{lemma:tangled and min plus}
    Let $X,Y \in \N^{n \times n}$.
    We have
    \begin{equation}
    \label{min in lemma}
        {\rm wt}(X,Y) = \min_{1 \leq k \leq n+1} \Big\{ \#X\big[[n] \times \{1, \ldots, k-1 \}\big] + \# Y \big[\{k, \ldots, n\} \times [n]\big] \Big\}.
    \end{equation}
\end{lemma}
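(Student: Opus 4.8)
The plan is to prove~\eqref{min in lemma} by establishing the two inequalities separately. The ``$\leq$'' direction is a pigeonhole-style counting argument that is valid for \emph{every} admissible $k$, so it bounds $\mathrm{wt}(X,Y)$ above by the minimum on the right. The ``$\geq$'' direction then only requires exhibiting \emph{one} system of up--down pairs whose cardinality equals the right-hand side of~\eqref{min in lemma}: combined with the ``$\leq$'' bound, this forces equality, and in particular I never need to argue separately that my system is of maximum size. Throughout, I will use the elementary observation that whether $(x,y)$ with $x=(a,b)\in X$ and $y=(c,d)\in Y$ forms an up--down pair depends only on the bottom vertex $b$ of $x$ and the top vertex $c$ of $y$ (namely $b\leq c$), and not on $a$ or $d$.

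For ``$\leq$'': fix $k$ with $1\leq k\leq n+1$, and let $\{(x_i,y_i)\}_{i\in I}$ be an arbitrary system of up--down pairs, with $x_i=(a_i,b_i)$ and $y_i=(c_i,d_i)$. Partition $I=I_1\sqcup I_2$ according to whether $b_i\leq k-1$ or $b_i\geq k$. For $i\in I_1$ we have $x_i\in X[[n]\times\{1,\ldots,k-1\}]$; for $i\in I_2$ the up--down condition gives $c_i\geq b_i\geq k$, so $y_i\in Y[\{k,\ldots,n\}\times[n]]$. Since $\bigcup_i x_i\subseteq X$ and $\bigcup_i y_i\subseteq Y$ as multisets, the sub-multiset of $x_i$'s indexed by $I_1$ and the sub-multiset of $y_i$'s indexed by $I_2$ are contained in these two induced subgraphs, so $|I_1|\leq\#X[[n]\times\{1,\ldots,k-1\}]$ and $|I_2|\leq\#Y[\{k,\ldots,n\}\times[n]]$. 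Adding these, taking the supremum over all systems, and then the minimum over $k$, yields ``$\leq$''.

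For ``$\geq$'': for $b,c\in[n]$ let $p_b$ be the number of kelps of $X$ with bottom vertex $b$ and $q_c$ the number of kelps of $Y$ with top vertex $c$, so that $\sum_{b=1}^{k-1}p_b=\#X[[n]\times\{1,\ldots,k-1\}]$ and $\sum_{j=k}^{n}q_j=\#Y[\{k,\ldots,n\}\times[n]]$. I build a system greedily, scanning $c=1,2,\ldots,n$ and maintaining a ``pool'' of unused kelps of $X$ whose bottom vertex is $\leq c$: at step $c$, first add the $p_c$ kelps with bottom vertex exactly $c$, then form $\min\{(\text{current pool size}),\,q_c\}$ pairs, each matching a pool kelp with a distinct kelp of $Y$ having top vertex $c$, and delete those $X$-kelps from the pool. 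Each pair formed is genuinely an up--down pair (a pool kelp at step $c$ has bottom vertex $\leq c$), and distinct pairs use distinct kelps of $X$ (deletions) and of $Y$ (distinct top vertices across steps, and at most $q_c$ used within step $c$); hence this is a valid system. Writing $m_c$ for the pool size just after step $c$, with $m_0=0$, one checks $m_c=\max\{0,\,m_{c-1}+p_c-q_c\}$, hence $m_c=\max_{1\leq k\leq c+1}\bigl\{\sum_{b=k}^{c}p_b-\sum_{j=k}^{c}q_j\bigr\}$; the number of pairs formed is the telescoping sum $\sum_{c=1}^{n}\bigl((m_{c-1}+p_c)-m_c\bigr)=\sum_{b=1}^{n}p_b-m_n$, which simplifies to $\min_{1\leq k\leq n+1}\bigl\{\sum_{b=1}^{k-1}p_b+\sum_{j=k}^{n}q_j\bigr\}$. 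This is a valid system of exactly the size appearing on the right of~\eqref{min in lemma}, so ``$\geq$'' holds and the proof is complete.

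The only real obstacle is the routine bookkeeping in the ``$\geq$'' step: verifying the Lindley-type recursion $m_c=\max\{0,m_{c-1}+p_c-q_c\}$ (immediate from the greedy update), unwinding it to the closed form, and collapsing the telescoping sum. Everything else is just translation between the multiset language of Definition~\ref{def:tangledness} and the induced-subgraph counts $\#X[S\times T]$. One could alternatively deduce ``$\geq$'' by applying K\"onig's theorem to the bipartite multigraph on the kelps of $X$ and $Y$ with the up--down adjacency and checking that its minimum vertex cover equals the right-hand side of~\eqref{min in lemma}, but the greedy construction above is more self-contained and produces the extremal system explicitly.
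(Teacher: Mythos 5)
Your proposal is correct, and while your ``$\leq$'' direction is essentially the same counting argument as the paper's (every up--down pair $(x,y)$ with bottom vertex $b$ and top vertex $c$ satisfying $b\leq c$ must have $b<k$ or $c\geq k$, so any system injects into the two induced-subgraph counts), your ``$\geq$'' direction takes a genuinely different route. The paper works with a \emph{maximum} system, normalized so that $b_1\leq\cdots\leq b_w$ and each $c_\ell$ is chosen as small as possible with $c_\ell\geq b_\ell$, and then exhibits a single tight cut index $k=c'+1$ (with $c'$ the largest top vertex among unused $Y$-kelps), using maximality of the system in a contradiction argument to rule out any unused $X$-kelp with bottom vertex $\leq c'$; in other words, it produces a dual witness showing the minimum is at most $\mathrm{wt}(X,Y)$. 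You instead argue primally: you build an explicit greedy system, verify the recursion $m_c=\max\{0,\,m_{c-1}+p_c-q_c\}$ for the pool sizes, and compute by telescoping that the system has size exactly $\min_{1\leq k\leq n+1}\bigl\{\sum_{b=1}^{k-1}p_b+\sum_{j=k}^{n}q_j\bigr\}$, so equality follows from the upper bound without ever discussing maximum systems. Your bookkeeping checks out (the pairs formed are genuine up--down pairs, each kelp is used at most once, the closed form for $m_c$ follows by induction with the empty $k=c+1$ term giving $m_c\geq 0$, and the telescoping identity is correct), so the proof is complete. What each approach buys: yours is constructive and sidesteps the extremal/normalization argument, at the cost of the Lindley-type recursion and its unwinding; the paper's is shorter on computation and explicitly identifies the minimizing $k$, at the cost of reasoning about a carefully chosen optimal system.
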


\begin{proof}
    Let $w$ be the weight of $(X,Y)$.
    Then by Definition~\ref{def:tangledness}, there exist up--down pairs $(x_1, y_1), \ldots, (x_w, y_w) \in X \times Y$, in which no single copy of a kelp appears more than once.
    By Definition~\ref{def:up-down}, for all $1 \leq \ell \leq w$, we have $x_\ell = (a_\ell, b_\ell)$ and $y_\ell = (c_\ell, d_\ell)$ with $b_\ell \leq c_\ell$.
    Thus for all $1 \leq k \leq n+1$, and all $1 \leq \ell \leq w$, we have $b_\ell < k$ or $c_\ell \geq k$, or both.
    Thus the right-hand side of ~\eqref{min in lemma} is at least $w$.
    We must now show that it is exactly $w$.
    
    To this end, in choosing the up--down pairs $(x_\ell,y_\ell)$, choose the $x_\ell$ so that $b_1 \leq \cdots \leq b_w$; then choose the $y_\ell$ so that $c_\ell$ is as small as possible while satisfying $c_\ell \geq b_\ell$.
    Let $y' = (c',d') \in Y \setminus \{y_1, \ldots, y_w\}$ such that $c'$ is as large as possible; if this set is empty, then put $c'=0$.
    We claim that the right-hand side of~\eqref{min in lemma} equals $w$ at the index $k = c'+1$.

    Taking $k = c'+1$ in~\eqref{min in lemma}, we first observe that for each $1 \leq i \leq w$, we have either $b_\ell < k$ or $c_\ell \geq k$, but not both.
    This is because if both were true, then we would have $b_\ell \leq c' < c_\ell$, which violates the minimality of $c_\ell$ (and if $c'=0$, then we would have $b_\ell = 0$, which is impossible since $1 \leq b_\ell \leq n$).
    Hence the subset $\{x_1, y_1, \ldots, x_w, y_w\}$ contributes exactly 1 to the right-hand side of~\eqref{min in lemma}.
    We must show that each kelp in $X \setminus \{x_1, \ldots, x_w\}$ and in $Y \setminus \{y_1, \ldots, y_w\}$ contributes 0.
    This is immediate for $Y \setminus \{y_1, \ldots, y_w\}$, since every element is of the form $(b,c)$ with $c \leq c'$, and thus does not contribute to the right-hand side of~\eqref{min in lemma} at $k = c'+1$.
    Finally, suppose (toward contradiction) that $x = (a,b) \in X \setminus \{x_1, \ldots, x_w\}$ and $b \leq c'$.
    Then $\{(x_\ell,y_\ell) : 1 \leq \ell \leq w \} \cup \{(x,y')\}$ is a system of up--down pairs for $(X,Y)$, which is a contradiction since it implies that ${\rm wt}(X,Y) > w$.
\end{proof}

\begin{proof}[Proof of Theorem~\ref{thm:main result}]

Since $\Phi$ is a bijection of sets, we need only show that $\Phi(X \star Y) = \Phi(X) \odot \Phi(Y)$.
Let $1 \leq i,j \leq n+1$.
We have
\begin{align*}
    \Phi(X \star Y)_{ij} &= {\rm wt}\Big(X\big[\{i, \ldots, n\} \times [n]\big], Y \big[[n] \times \{1, \ldots, j-1\}\big]\Big) & \text{by Lemma~\ref{lemma:Sigma X star Y}} \\
    &= \min_{1 \leq k \leq n+1} \Big\{ \#X\big[\{i, \ldots, n\} \times \{1, \ldots, k-1 \}\big] + \# Y \big[\{k, \ldots, n\} \times \{1, \ldots, j-1\}\big] \Big\} & \text{by Lemma~\ref{lemma:tangled and min plus}} \\
    &= \min_{1 \leq k \leq n+1} \Big\{ \Phi(X)_{ik} + \Phi(Y)_{kj} \Big\} & \text{by~\eqref{fact Phi number}}\\
    &= \big(\Phi(X) \odot \Phi(Y) \big)_{ij} & \text{by~\eqref{min plus matrix product}},
\end{align*}
and the result follows.
\end{proof}

\bibliographystyle{amsplain}
\bibliography{references}

\end{document}